\documentclass[11pt, reqno]{amsart}
\usepackage[margin=1.2in]{geometry}          
\usepackage{graphicx}
\usepackage{amssymb}
\usepackage{epstopdf}
\usepackage{tikz-cd} 
\usepackage[colorlinks=true, pdfstartview=FitV, linkcolor=blue, citecolor=blue, urlcolor=blue]{hyperref}
\usepackage{tikz}
\usepackage[listings]{tcolorbox}
\usepackage{caption}      
\usepackage{subcaption}
\usepackage{mathrsfs}  
\usepackage{enumitem} 
\usepackage{verbatim} 
\usepackage[smalltableaux]{ytableau} 
\usepackage[nameinlink]{cleveref}
\usepackage{soul}
\usepackage{color}
\usepackage{xcolor}
\definecolor{darkgreen}{rgb}{0.0,0.7,0.0}
\definecolor{linkblue}{HTML}{1a0dab}

\definecolor{Peach}{HTML}{FFC78F}
\definecolor{PeachBorder}{HTML}{FF9900}

\definecolor{LightBlue}{HTML}{C7F5FF}
\definecolor{BlueBorder}{HTML}{0099CC}

\definecolor{LightPurple}{HTML}{E6E6FA}  
\definecolor{PurpleBorder}{HTML}{9370DB} 

\definecolor{LightYellow}{HTML}{FFFFE0}  
\definecolor{YellowBorder}{HTML}{FFD700} 

\definecolor{LightGreen}{HTML}{E6FFE6}   
\definecolor{GreenBorder}{HTML}{66CC66}  

\definecolor{LightLavender}{HTML}{F3E8FD}
\definecolor{LavenderBorder}{HTML}{B57EDC}

\definecolor{LightYellow2}{HTML}{FFF9C4}
\definecolor{YellowBorder2}{HTML}{FBC02D}

\definecolor{LightGreen2}{HTML}{E0F2F1}
\definecolor{GreenBorder2}{HTML}{26A69A}

\definecolor{LightPurple}{HTML}{EDE7F6}
\definecolor{PurpleBorder}{HTML}{7E57C2}

\definecolor{LightPink}{HTML}{F8BBD0}
\definecolor{PinkBorder}{HTML}{EC407A}

\definecolor{LightCyan}{HTML}{E0F7FA}
\definecolor{CyanBorder}{HTML}{00ACC1}

\newcommand{\kk}{{\sf k}} 

\newcommand{\Cl}{{\mathrm{Cl}}}

\newtheoremstyle{notheorem}
  {10pt}    
  {10pt}    
  {\itshape} 
  {}        
  {}        
  {}        
  {0pt}     
  {}        

\theoremstyle{notheorem}

\theoremstyle{plain} 
\newtheorem{thm}{Theorem}[section]

\newtheorem{introthm}{Theorem}

\newtheorem*{introthm*}{Theorem}

\newtheorem{cor}[thm]{Corollary}
\newtheorem{lem}[thm]{Lemma}
\newtheorem{prop}[thm]{Proposition}

\newtheorem*{oprobl*}{Open Problem}

\theoremstyle{definition}
\newtheorem{defn}[thm]{Definition}

\newtheorem{assumption}[thm]{Assumption}

\theoremstyle{remark}
\newtheorem{rem}[equation]{Remark}

\numberwithin{equation}{section}  
\newcounter{proofeq}
\renewcommand{\theproofeq}{\arabic{proofeq}}

\title{Principal symmetric ideals in the coordinate rings of curves}

\author[Vinuge Rupasinghe]{Vinuge Rupasinghe}
\address{}
\email{vinuge.dinusith@outlook.com}

\keywords{Principal symmetric ideals, Dedekind domains, ideal class group, geometric factorization, ramification, symmetric discriminant, affine plane curves, arithmetic geometry.}
\subjclass[2020]{Primary: 13A50, 13F05. Secondary: 13C20, 14H50, 14G99.}

\begin{document}
\maketitle

\begin{abstract}
The study of principal symmetric ideals (PSIs) in ambient polynomial rings was complicated by the combinatorial instability of minimal generators for ideal powers. We resolve this instability in the two variable case by translating the problem into the arithmetic geometry of symmetric affine plane curves. By working topdown within the Dedekind domain of a symmetric coordinate ring, we establish a precise geometric dictionary for PSIs. We prove that the prime factorization of a PSI is strictly determined by the $S_2$-orbits of its symmetric intersection locus, and that ramification corresponds exactly to tangential intersections, which are detected globally by a novel Symmetric Discriminant ideal. Crucially, we demonstrate that the ideal class of any PSI is a $2$-torsion element in the Ideal Class Group. This establishes that the powers of a PSI exhibit strict periodicity, alternating between being principal and requiring exactly two generators. Finally, we localize this arithmetic obstruction to the axis of symmetry, culminating in a Parity Criterion that determines principality based on intersection multiplicities along the diagonal.
\end{abstract}

\setcounter{tocdepth}{1} 
\tableofcontents

\section{Introduction}
\label{sec:intro}

The study of homogeneous principal symmetric ideals (PSIs) occupies a rich intersection between commutative algebra and invariant theory. In the standard ambient setting, one considers the polynomial ring $S = \kk[x_1, \ldots, x_d]$ equipped with the natural action of the symmetric group $S_d$ permuting the variables. Given a polynomial $f \in S$, the principal symmetric ideal $(f)_{S_d}$ is generated by the finite orbit of $f$ under this action. 

While structurally elegant, the arithmetic behavior of PSIs in the ambient space is notoriously complex. A central difficulty, observed in recent literature \cite{HSS, PSI, WAL25}, is the combinatorial explosion of minimal generators required for the power of a principal symmetric ideal, $I^n$, as the dimension $d$ and the power $n$ increase. In this paper, we refer to this unbounded growth as \emph{generator instability}. Because the ambient ring $S$ has dimension $d \ge 2$, it lacks the rigid, one dimensional arithmetic structure required to systematically resolve questions of ideal factorization and divisibility.

In this paper, we resolve this combinatorial instability by shifting the problem from the ambient polynomial ring into the realm of arithmetic geometry. We restrict our attention to the case $d=2$ and specialize to the coordinate ring $R = \kk[x,y]/(g)$ of a non-singular, symmetric affine plane curve. 

This geometric translation lowers the Krull dimension to one, ensuring that $R$ is a Dedekind domain. In this setting, the combinatorial problem of finding minimal generators is elegantly reinterpreted as an arithmetic obstruction measured by the Ideal Class Group, $\Cl(R)$. By studying the action of the Galois involution $\sigma(x,y) = (y,x)$ on the fractional ideals of $R$, we map the algebraic properties of PSIs directly onto the local and global geometry of the underlying curve.

\subsection{Statement of Main Results}
Our investigation yields a complete geometric dictionary for the arithmetic of Principal Symmetric Ideals. We demonstrate that the algebraic factorization of $I_f = (f, \sigma f)R$ is strictly determined by the $S_2$-orbits of the intersection between the curve and the symmetric locus defined by $f$. 

Furthermore, we show that the failure of square-free factorization ramification is not an abstract algebraic anomaly, but a highly visual geometric phenomenon governed by tangency. 

\begin{introthm}[Geometric Factorization and Ramification, \Cref{thm:factorization} \& \Cref{thm:tangency}]
The prime factorization of a Principal Symmetric Ideal $I_f \subset R$ is uniquely determined by the orbits of the Symmetric Intersection Locus $Z(f)$. Furthermore, the ideal is ramified at a prime factor $\mathfrak{p}$ if and only if the intersection of the base curve $V(g)$ and the generator locus $V(f)$ is tangential at the corresponding point. Globally, this ramification is detected by the vanishing of the Symmetric Discriminant ideal, $\Delta(f)$.
\end{introthm}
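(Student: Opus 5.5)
The plan is to work locally at each maximal ideal of the Dedekind domain $R=\kk[x,y]/(g)$, with $\kk$ algebraically closed, so that maximal ideals correspond to points $P\in V(g)$. Since $R$ is Dedekind, the nonzero ideal $I_f=(f,\sigma f)R$ factors uniquely as $\prod_P \mathfrak{m}_P^{e_P}$. For each $P$ the exponent is
\[
e_P=\min\bigl(v_P(f),\,v_P(\sigma f)\bigr),
\]
because $I_f R_{\mathfrak{m}_P}$ is generated by $f$ and $\sigma f$ in the DVR $R_{\mathfrak{m}_P}$. So $e_P>0$ exactly when $f(P)=f(\sigma P)=0$, that is, when $P$ lies in the Symmetric Intersection Locus $Z(f)=V(g)\cap V(f)\cap V(\sigma f)$.

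The next step is to show that the factorization depends only on $S_2$-orbits. Since $g$ is symmetric, $\sigma$ induces an automorphism of $R$ sending $\mathfrak{m}_P$ to $\mathfrak{m}_{\sigma P}$, and $v_{\sigma P}(h)=v_P(\sigma h)$. Applying this to $h=f$ and $h=\sigma f$ gives $e_{\sigma P}=e_P$. Grouping the primes into orbits then yields
\[
I_f=\prod_{\text{orbits }\{P,\sigma P\}\subset Z(f)} (\mathfrak{m}_P\mathfrak{m}_{\sigma P})^{e_P},
\]
with the single factor $\mathfrak{m}_P^{e_P}$ when $P$ is diagonal. This gives the first assertion.

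For ramification, read ``ramified at $\mathfrak{p}=\mathfrak{m}_P$'' as $e_P\ge 2$. In the DVR $R_{\mathfrak{m}_P}$, $v_P(f)$ is the local intersection multiplicity of $V(g)$ and $V(f)$ at $P$, since $V(g)$ is smooth there. So $v_P(f)\ge 2$ holds iff $f\in \mathfrak{m}_P^2R_{\mathfrak{m}_P}$. Because $\mathfrak{m}_P/\mathfrak{m}_P^2$ is the cotangent line of the curve, this is the condition that $df$ vanishes on $T_PV(g)$, i.e.
\[
\det\begin{pmatrix} f_x & f_y\\ g_x & g_y\end{pmatrix}(P)=0,
\]
which is exactly tangency of $V(f)$ and $V(g)$ at $P$. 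We must still pass from $\min(v_P(f),v_P(\sigma f))\ge2$ to a statement about $f$ alone. I would use the symmetry to compare the two Jacobians: applying $\sigma$ relates the Jacobian of $(\sigma f,g)$ at $P$ to that of $(f,g)$ at $\sigma P$. At a diagonal point $P=\sigma P$, the gradient of the symmetric $g$ is $(a,a)$ and the two conditions coincide. At off-diagonal points the tangency statement must be read as tangency of $V(g)$ with both $V(f)$ and $V(\sigma f)$ at $P$, equivalently with $V(f)$ at both $P$ and $\sigma P$. I would adopt that definition, presumably the paper's meaning of tangential intersection of the symmetric locus.

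Finally, define $\Delta(f)$ as the ideal of $R$ generated by $f$, $\sigma f$, $J(f,g)$ and $\sigma J(f,g)$, where $J(f,g)=f_xg_y-f_yg_x$; this is presumably the paper's definition. Its zero set is exactly the set of ramified points by the previous step, which gives the global statement. The main obstacle is the off-diagonal bookkeeping in the ramification step, namely making the minimum of two valuations match a single tangency condition. I expect to handle it by the $\sigma$-transport identity above together with the choice of definition just described.
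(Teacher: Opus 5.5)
Your factorization argument is essentially the paper's. The paper gets $e_{\sigma P}=e_P$ by applying $\sigma$ to the factorization of the $\sigma$-stable ideal $I_f$ and invoking uniqueness of factorization. Your local formula $e_P=\min(v_P(f),v_P(\sigma f))$, together with $v_P(\sigma h)=v_{\sigma P}(h)$, does the same job and also computes the exponents explicitly.

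For the ramification and discriminant parts, your symmetrized conditions are not ``presumably the paper's meaning''. The paper's Tangency Criterion asserts that $e_{\mathfrak p}>1$ if and only if $V(f)$ alone is tangent to $C$ at $P$. Its $\Delta(f)$ is $(g,f,\det J(g,f))\subset\kk[x,y]$, with no $\sigma f$ and no conjugate Jacobian.

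The paper's proof reduces $e_P\ge 2$ to $f,\sigma f\in\mathfrak m_P^2$, checks only $f$, and then asserts that ``by $S_2$-symmetry the identical local behavior governs $\sigma f$''. Your identity $v_P(\sigma f)=v_{\sigma P}(f)$ shows exactly why this step fails off the diagonal.

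A concrete counterexample: take $g=x^2+y^2-1$ and $f=x+y^2-1$. On $C$ one has $f=x(1-x)$ and $\sigma f=y(1-y)$, so $Z(f)=\{P,\sigma P\}$ with $P=(1,0)$. The parabola $V(f)$ is smooth and tangent to the circle at $P$: here $\det J(g,f)=2y(2x-1)$ vanishes and $v_P(f)=2$. But $v_P(\sigma f)=v_{\sigma P}(f)=1$, so $I_f=\mathfrak m_P\,\mathfrak m_{\sigma P}$ is square-free, while the paper's $\Delta(f)$ is a proper ideal. Moreover, because the paper's $\Delta(f)$ omits $\sigma f$, its zero set can in general contain tangency points of $V(f)$ that do not lie in $Z(f)$ at all.

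So, as literally stated, the following hold in general:
\begin{itemize}
\item ``ramified $\Rightarrow$ tangent'' holds;
\item ``$\Delta(f)=(1)\Rightarrow$ unramified'' holds;
\item the full equivalence holds at diagonal points, where $e_P=v_P(f)$.
\end{itemize}
The remaining directions fail off the diagonal.

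Your versions are the right repair: tangency of $V(f)$ with $C$ at both $P$ and $\sigma P$, and $\Delta(f)$ generated in $R$ by $f,\sigma f,J(f,g),\sigma J(f,g)$. Note also that $\sigma J(f,g)=-J(\sigma f,g)$ for symmetric $g$. With these versions your argument is complete and correct, and more careful than the paper's. However, you should present them explicitly as a correction of the statement, supported by a counterexample like the one above, rather than as a reading of what the paper intends.
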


Having established the local geometry, we address the global arithmetic. We introduce a Norm Principle for symmetric coordinate rings, proving that the product of any ideal with its Galois conjugate yields a principal ideal generated by a symmetric element. Applying this to PSIs, we resolve the generator instability problem by proving that the obstruction to a PSI being principal is strictly bounded by the torsion of the Class Group.

\begin{introthm}[PSI Torsion and Periodicity, \Cref{thm:torsion}]
Let $I_f$ be a Principal Symmetric Ideal in $R$. Then the ideal class $[I_f]$ is a $2$-torsion element in the Ideal Class Group $\Cl(R)$. Consequently, the powers of $I_f$ exhibit strict periodicity: $I_f^n$ is a principal ideal for all even $n$, and requires exactly two generators for all odd $n$.
\end{introthm}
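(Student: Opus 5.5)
The plan is to combine the Galois stability of $I_f$ with the Norm Principle for symmetric coordinate rings, and then read off the periodicity from standard Dedekind-domain facts.

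\textbf{Galois stability.} Since $\sigma$ is an involution, $\sigma(I_f) = (\sigma f, \sigma^2 f)R = (\sigma f, f)R = I_f$. So $I_f$ is a $\sigma$-stable ideal, i.e.\ it coincides with its own Galois conjugate.

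\textbf{2-torsion.} I will apply the Norm Principle (stated earlier in the paper) to $J = I_f$. It says that for any nonzero ideal $J \subset R$, the product $J\,\sigma(J)$ is a principal ideal $hR$ generated by a symmetric element $h \in R^{\sigma}$. With $J = I_f$ and $\sigma(I_f) = I_f$, this gives
\[
I_f^2 = I_f\,\sigma(I_f) = hR .
\]
Hence $2[I_f] = 0$ in $\Cl(R)$, so $[I_f]$ is $2$-torsion. If the Norm Principle has only been stated for prime ideals, I would first extend it multiplicatively via unique factorization in the Dedekind domain $R$, using $\sigma(JK) = \sigma(J)\sigma(K)$.

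\textbf{Periodicity.}
\begin{itemize}
\item For even $n = 2m$, we get $I_f^n = (I_f^2)^m = h^m R$, which is principal.
\item For odd $n = 2m+1$, we get $I_f^n = h^m I_f$. Multiplication by the nonzerodivisor $h^m$ is an $R$-module isomorphism $I_f \cong h^m I_f$, so $[I_f^n] = [I_f]$ and $I_f^n$ needs exactly as many generators as $I_f$.
\item Every ideal in a Dedekind domain can be generated by two elements. Moreover $I_f = (f, \sigma f)$ is two-generated by definition, so $I_f^n$ is always generated by at most two elements.
\end{itemize}

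\textbf{Main obstacle: ``exactly two''.} This is the step I expect to need the most care. ``Exactly two generators'' for odd $n$ holds precisely when $[I_f] \neq 0$, that is, when $I_f$ itself is not principal. If $I_f$ happens to be principal, for instance when $f$ and $\sigma f$ are associates, then every power is principal. So I would state the dichotomy as follows: if $I_f$ is non-principal, the odd powers require exactly two generators; otherwise all powers are principal. I would read the theorem's ``requires exactly two generators'' under this standing non-principality hypothesis, or as the sharp upper bound. With that understood, the only genuinely nontrivial input is the Norm Principle, which I take as given.
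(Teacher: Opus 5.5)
Your route is the same as the paper's. You combine $\sigma(I_f)=I_f$ with the Norm Principle to get $I_f^2=I_f\,\sigma(I_f)=hR$, then split by parity and use two-generation in a Dedekind domain. Your caveat that ``exactly two'' needs $I_f$ to be non-principal is the hypothesis the paper's own corollary adds, and everything downstream of the Norm Principle is sound. The genuine gap is the one input you take as given. The Norm Principle is false for a general symmetric $g$, and the $2$-torsion claim fails with it. Write $u=x+y$, $v=x-y$ (with $\operatorname{char}\kk\neq 2$) and $g=G(u,v^2)$. Then $R^{S_2}=\kk[e_1,e_2]/(g)\cong\kk[u,w]/(G(u,w))$, which is the coordinate ring of the quotient curve $C/\sigma$. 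The paper's step $R^{S_2}\cong\kk[u]$ (every element of the form $A(u)+B(u)v$) holds only when $G$ is linear in $w$, i.e.\ for $g$ of the shape $(x-y)^2-p(x+y)$. In general you only get $J\,\sigma(J)=N(J)R$, where $N(J)$ is an ideal of the Dedekind domain $R^{S_2}$. Nothing forces $N(J)R$ to be principal.

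Here is a concrete counterexample. Over $\kk=\mathbb{C}$, let $g=(x-y)^4-h(x+y)$ with $h(t)=t^3-t$.
\begin{itemize}
\item $C$ is the affine part of the smooth plane quartic $v^4=h(u)$, of genus $3$ with a single point at infinity, so $\Cl(R)\cong\mathrm{Pic}^0(\bar C)$.
\item $R^{S_2}=\mathbb{C}[u,v^2]$ is the affine elliptic curve $E\colon w^2=h(u)$.
\item The double cover $\pi\colon\bar C\to\bar E$, $(u,v)\mapsto(u,v^2)$, is ramified (over $w=0$ and at infinity), so $\pi^*$ is injective on $\mathrm{Pic}^0$.
\end{itemize}
Take $f=(x+y-2)+(x-y)\bigl((x-y)^2-\sqrt6\bigr)$. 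Then $I_f=(u-2,\ v(v^2-\sqrt6))$. On $C$ we have $v^4-6=(u-2)(u^2+2u+3)$, and $v$ is a unit at the two zeros $(2,\pm 6^{1/4})$. From these one checks that $I_f=\mathfrak{m}_Q R$ for $Q=(2,\sqrt6)\in E$. The class of $I_f$ is $\pi^*(Q-O)$, so $[I_f]^2$ corresponds to $\pi^*(2Q-2O)$. This is nonzero because $Q$ is not $2$-torsion on $E$ (its $w$-coordinate is nonzero). So your argument, like the paper's, proves the statement only under an extra hypothesis such as $R^{S_2}$ being a PID, e.g.\ the hyperelliptic shape above. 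In that case $N(J)$ is principal and the Norm Principle holds.
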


Finally, we localize this global Class Group obstruction to the axis of symmetry. By conceptually "folding" the curve along the diagonal $y=x$, we show that off-diagonal intersection points act as conjugate pairs that perfectly cancel each other out in the Class Group. The entire arithmetic obstruction is therefore localized to the diagonal points. We formalize this via the \emph{Symmetric Index}, $\operatorname{Ind}_{sym}(f)$. In the hyperelliptic setting, this yields a highly efficient numerical test for principality.

\begin{introthm}[The Parity Criterion, \Cref{cor:parity}]
Let $R$ be the coordinate ring of a hyperelliptic curve. A Principal Symmetric Ideal $I_f$ is a principal ideal if and only if the generator $f$ intersects the diagonal $y=x$ with even total multiplicity.
\end{introthm}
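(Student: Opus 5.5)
The plan is to reduce principality of $I_f$ to the triviality of a class in $\Cl(R)$, and then to compute that class one intersection point at a time, separating off-diagonal points from diagonal points.

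\textbf{Step 1: divisor of $I_f$.} By \Cref{thm:factorization}, write the ideal as
\[
I_f=\prod_{\mathfrak p}\mathfrak p^{e_{\mathfrak p}}
\]
over the maximal ideals at the points of the symmetric intersection locus $Z(f)$. For each point $P$, the exponent $e_{\mathfrak p}$ is $\min(\operatorname{ord}_P f,\operatorname{ord}_P\sigma f)$. Since $\sigma$ permutes these primes and fixes $I_f$, the exponents are constant on $S_2$-orbits.

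\textbf{Step 2: off-diagonal orbits.} Let $\{P,\sigma P\}$ be an off-diagonal orbit with primes $\mathfrak p$ and $\sigma\mathfrak p$. The Norm Principle gives that $\mathfrak p\cdot\sigma\mathfrak p$ is principal, generated by a symmetric element. Hence in $\Cl(R)$ the class $[I_f]$ equals the product of $[\mathfrak p_Q]^{e_Q}$ over the diagonal points $Q=(a,a)$ of $Z(f)$.

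\textbf{Step 3: diagonal points.} Here I specialise to the hyperelliptic setting, which I take to mean that $\sigma$ is the hyperelliptic involution. Then the diagonal points are the Weierstrass-type fixed points, and the quotient by $\sigma$ is rational. Two consequences are needed.
\begin{itemize}
\item Each $\mathfrak p_Q^2$ is principal. Indeed $\mathfrak p_Q^2=\mathfrak p_Q\,\sigma\mathfrak p_Q$, so this is again the Norm Principle. This is also consistent with \Cref{thm:torsion}.
\item Any two diagonal primes have the same class. The element $x-y$, or a suitable symmetric function pulled back from the rational quotient, vanishes exactly on the diagonal points. So $\prod_Q \mathfrak p_Q$ is principal up to the fixed contribution of the points at infinity. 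For the standard affine model I would check that this makes all $[\mathfrak p_Q]$ equal to a single class $c$ of order dividing $2$.
\end{itemize}
Together these give $[I_f]=c^{\sum_Q e_Q}$.

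\textbf{Step 4: conclude.} I identify $\sum_Q e_Q$ with the total intersection multiplicity of $V(f)$ with the diagonal on the curve. At a fixed point $Q$ we have $\operatorname{ord}_Q\sigma f=\operatorname{ord}_Q f$, so $e_Q=\operatorname{ord}_Q f$. This equals the local intersection number of $f$ with the diagonal, using that $x-y$ is a local parameter at $Q$ up to the ramification index; \Cref{thm:tangency} should supply the needed local computation. The criterion then follows, provided $c\neq 1$. That is, a single diagonal prime must be non-principal. This holds for a hyperelliptic curve of genus $\ge 1$ because $2Q\sim 2Q'$ with $Q\not\sim Q'$, as the curve is not rational.

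\textbf{Expected obstacle.} The main difficulty is Step 3: proving that all diagonal primes share one nontrivial class and controlling the contribution of the points at infinity. Parity alone only shows that the class has order dividing $2$. I would attack this by computing the divisor of $x-y$ explicitly in the hyperelliptic model. I would also invoke the standard fact that differences of Weierstrass points are nontrivial $2$-torsion, which requires genus $\ge 1$; otherwise the criterion degenerates.
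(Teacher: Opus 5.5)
Your plan follows the same route as the paper's proof: use the Norm Principle to discard the split part, then put all diagonal primes into one Weierstrass class. It breaks at the same point. The second bullet of Step 3 is false, and no computation can repair it, because the criterion itself fails.

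Take $\kk$ algebraically closed with $\operatorname{char}(\kk)\neq 2$, and $g=(x-y)^2-h(x+y)$ with $h$ monic and squarefree of degree $2m+1\ge 3$. In the coordinates $u=x+y$, $v=x-y$:
\begin{itemize}
\item the curve is $v^2=h(u)$, of genus $m\ge 1$;
\item $\sigma$ acts as $v\mapsto -v$, and $R^{S_2}=\kk[u]$;
\item the smooth model $\bar C$ has a single point $\infty$ at infinity, and $\Cl(R)\cong\operatorname{Pic}^0(\bar C)$ via $[\mathfrak p_P]\mapsto P-\infty$;
\item the diagonal points are $Q_i=(e_i/2,e_i/2)$ with $h(e_i)=0$.
\end{itemize}
An equality $[\mathfrak p_{Q_i}]=[\mathfrak p_{Q_j}]$ with $i\neq j$ would mean $Q_i\sim Q_j$, i.e.\ a degree-one map to $\mathbb{P}^1$. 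That is exactly the fact you invoke in Step 4 to get $c\neq 1$. So your own argument shows the diagonal classes are pairwise distinct, not equal. Computing $\operatorname{div}(x-y)$, as you propose, yields only the relation $\sum_i[\mathfrak p_{Q_i}]=0$.

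Here is an explicit counterexample family. For $S\subseteq\{1,\dots,2m+1\}$, let $a=\prod_{i\in S}(u-e_i)$ and $f_S=(x-y)+a$.
\begin{itemize}
\item Then $I_{f_S}=(a+v,\,a-v)=(a,v)$.
\item At each $Q_i$, $v$ is a uniformizer and $\operatorname{ord}_{Q_i}(u-e_i)=2$; off the diagonal, $v$ is a unit. Hence $I_{f_S}=\prod_{i\in S}\mathfrak p_{Q_i}$.
\item $f_S$ meets the diagonal with total multiplicity $|S|$, whether you count by $\operatorname{ord}_{Q_i}$ on $C$ or along the line $y=x$.
\end{itemize}
For $S$ the full index set, $I_{f_S}=(v^2,v)=(x-y)R$ is principal, yet the multiplicity $2m+1$ is odd. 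For $S=\{1,2\}$, the multiplicity is even, but $[I_{f_S}]=Q_1+Q_2-2\infty\neq 0$. This is because the only effective divisors in $|2\infty|$ are $2\infty$ and the fibres $P+\sigma P$ of $u$.

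So principality depends on which Weierstrass points carry odd multiplicity, not on the total parity. The classes $[\mathfrak p_{Q_i}]$ generate $J[2]\cong(\mathbb{Z}/2)^{2m}$, which is not cyclic.

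The paper's proof asserts precisely your missing step: that differences of Weierstrass points are principal, and that a single Weierstrass class generates the $2$-torsion. For genus $\ge 1$ this is false; only $2(P-Q)$ is principal. So the paper cannot supply the missing step.

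What your Steps 1--2 do establish correctly (the Norm Principle is valid here since $R^{S_2}=\kk[u]$) is
\[
[I_f]=\sum_Q\operatorname{ord}_Q(f)\,[\mathfrak p_Q],
\]
summed over diagonal $Q$, with each $[\mathfrak p_Q]$ of order exactly $2$.

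Separately, your Step 4 identification is also wrong: $\operatorname{ord}_Q f$ on $C$ is not the intersection number of $V(f)$ with the line $y=x$. For $f=x+y-e_i$ these are $2$ and $1$.
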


\subsection{Organization of the Paper}
The paper is structured as follows. In \Cref{sec:psi}, we establish the foundational architecture, transitioning from the ambient polynomial ring to the Dedekind domain of a symmetric coordinate ring. In \Cref{chap: Geometric Factorization}, we formalize the Geometric Factorization theorem and classify the prime orbits. \Cref{chap: Ramification} introduces the machinery of Kähler differentials to build the Symmetric Discriminant ideal, providing a global algebraic test for ramification via tangency. Finally, in \Cref{chap: Class Group}, we introduce the Galois action on the Class Group, prove the $2$-torsion stability of PSIs, and define the Symmetric Index to establish the Parity Criterion.

\section{Preliminaries: Ambient Space to Curve Geometry}
\label{sec:psi}

\subsection{The Ambient Setting}
In this section, we formally define the central object of our study: the Principal Symmetric Ideal (PSI). We begin by reviewing the definition in the ambient polynomial ring, as used in previous literature \cite{HSS}, before specializing to the coordinate ring of a symmetric curve where our arithmetic results take place.

\begin{defn}[Principal Symmetric Ideal]
Let $\kk$ be a field and let $S = \kk[x_1,\ldots,x_d]$ be the polynomial ring in $d$ variables. 
The symmetric group $S_d$ acts on $S$ by permuting the variables:
\[
\sigma \cdot f(x_1,\ldots,x_d) = f(x_{\sigma(1)},\ldots,x_{\sigma(d)}),
\quad \sigma \in S_d.
\]
Given a polynomial $f \in S$, the \emph{principal symmetric ideal} generated by $f$, abbreviated as a \emph{PSI}, is the ideal generated by the orbit of $f$ under this action. Explicitly,
\[
(f)_{S_d} = \bigl( f(x_{\sigma(1)}, \ldots, x_{\sigma(d)}) \mid \sigma \in S_d \bigr) \subseteq S.
\]
\end{defn}

In this ambient setting, the study of PSIs is primarily combinatorial. As noted in \cite{WAL25}, questions regarding the number of generators for powers of PSIs become increasingly complex as the dimension increases, often lacking a stable arithmetic behavior. Furthermore, the ambient ring $S$ has dimension $d \ge 2$, which precludes the use of powerful one dimensional arithmetic tools such as Dedekind domains and divisor theory. To resolve this structural bottleneck, we shift our focus from the ambient space to the intrinsic geometry of a curve. By restricting our domain to the coordinate ring of a symmetric plane curve, we drop the dimension to one. In this setting, the combinatorial instability of PSI generators can be elegantly reinterpreted through the lens of ideal class groups and arithmetic geometry.

\subsection{The Symmetric Coordinate Ring}
\begin{defn}[Symmetric Coordinate Ring]
Let $\kk$ be a field and let $g \in \kk[x,y]$ be an irreducible polynomial such that the ideal
$(g)$ is invariant under the natural action of the symmetric group $S_2$ on $\kk[x,y]$.  
The \emph{symmetric coordinate ring} of the affine plane curve defined by $g(x,y)=0$ is the
quotient ring $R := \kk[x,y]/(g)$, equipped with the induced $S_2$-action.
\end{defn}

\begin{defn}[Dedekind PSI]
Let $f \in R$ be a non-zero element. The \textbf{Principal Symmetric Ideal} in $R$ is the ideal generated by the orbit of $f$:
\[ I_f = \big( f, \, \sigma(f) \big)R. \]
\end{defn}

\begin{prop}[Descent of Symmetry]
The action of $S_2$ on $S$ induces a well-defined automorphism on the quotient ring $R = S/(g)$ if and only if the ideal $(g)$ is stable under $S_2$. In particular, if $g$ is a symmetric polynomial, the action is well-defined.
\end{prop}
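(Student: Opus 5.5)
The plan is to reduce the statement to the universal property of the quotient ring. Write $\sigma\colon S \to S$ for the $\kk$-algebra automorphism $\sigma(x)=y$, $\sigma(y)=x$, and $\pi\colon S \to R = S/(g)$ for the canonical surjection. An induced automorphism $\bar\sigma$ of $R$ is "well defined" exactly when it satisfies $\bar\sigma\circ\pi = \pi\circ\sigma$. So the question is whether the composite $\pi\circ\sigma\colon S\to R$ factors through $\pi$.

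\emph{Sufficiency.} Assume $\sigma((g)) \subseteq (g)$. Then $\ker(\pi\circ\sigma) = \sigma^{-1}((g))$ contains $(g)$, since for $h\in(g)$ we have $\sigma(h)\in(g)$. By the universal property of the quotient there is a unique ring map $\bar\sigma\colon R\to R$ with $\bar\sigma(h+(g)) = \sigma(h)+(g)$. Because $\sigma^2=\mathrm{id}_S$, the induced map satisfies $\bar\sigma^2=\mathrm{id}_R$. Hence $\bar\sigma$ is bijective, an automorphism, and the $S_2$-action descends.

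\emph{Necessity.} Suppose $\bar\sigma\circ\pi = \pi\circ\sigma$ holds for some map $\bar\sigma$ on $R$. For $h\in (g)$ we get $\pi(\sigma(h)) = \bar\sigma(\pi(h)) = \bar\sigma(0) = 0$, so $\sigma(h)\in (g)$. Thus $\sigma((g))\subseteq (g)$. Applying $\sigma$ once more and using $\sigma^2=\mathrm{id}$ gives $(g)\subseteq\sigma((g))$, so $(g)$ is $S_2$-stable.

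\emph{The symmetric case.} If $g$ is symmetric, i.e.\ $\sigma(g)=g$, then $\sigma((g)) = (\sigma(g)) = (g)$ because $\sigma$ is a ring automorphism. The criterion above then applies directly.

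There is no serious obstacle; the only point needing care is interpreting "well-defined" as compatibility with $\pi$, which makes necessity a one-line kernel check. One might also note that stability of $(g)$ is equivalent to $\sigma(g) = c\,g$ for some unit $c\in\kk^\times$. Since $\sigma$ is an involution, $c^2=1$, so $c=\pm1$ and $g$ is symmetric or antisymmetric. This remark is not required for the proposition itself.
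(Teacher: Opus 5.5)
Your argument is correct. It is the standard descent-through-the-quotient argument: the universal property of $S/(g)$, with $\bar\sigma^2=\mathrm{id}$ giving bijectivity, a kernel check for necessity, and $\sigma((g))=(\sigma(g))$ for the symmetric case. The paper's own proof only asserts that this is a standard result, so your write-up supplies exactly the argument it leaves implicit.
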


\begin{proof}
This is a standard algebraic result regarding group actions descending to quotient rings by invariant ideals.
\end{proof}

Having established that the $S_2$-action descends naturally, we must understand the internal algebraic structure of $R$ relative to its symmetric invariants. 

\begin{defn}[Elementary Symmetric Polynomials]
The elementary symmetric polynomials in the variables $x$ and $y$ are defined as $e_1 = x + y$ and $e_2 = xy$. These polynomials generate the invariant subring $\kk[x,y]^{S_2}$.
\end{defn}

\begin{prop}[Quadratic Representation]\label[prop]{prop:quadratic rep}
The coordinate ring $R$ functions as a quadratic extension of the invariant subring $R^{S_2}$. Specifically, every element $h \in R$ can be expressed uniquely in the form $h = a + b \cdot x$, where $a, b \in R^{S_2}$ are symmetric polynomials.
\end{prop}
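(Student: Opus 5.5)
The plan is to prove the statement first in the ambient ring $S=\kk[x,y]$ and then push it down to $R=S/(g)$. Existence descends directly to $R$. Uniqueness needs only that $R$ is a domain, together with a short computation using the involution $\sigma$.

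\emph{Step 1 (ambient decomposition).} I will show that $S$ is a free $\kk[e_1,e_2]$-module with basis $\{1,x\}$. The key relation is
\[
x^2 = e_1 x - e_2,
\]
since $x$ is a root of $T^2-e_1T+e_2=(T-x)(T-y)$. Write $y=e_1-x$. Then every monomial $x^iy^j$ is a polynomial in $x$ with coefficients in $\kk[e_1,e_2]$. Repeatedly reducing $x^2$ by the relation brings it to the form $A+Bx$ with $A,B\in\kk[e_1,e_2]$.

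For uniqueness in $S$, suppose $A+Bx=0$. Applying $\sigma$ gives $A+By=0$, and subtracting gives $B(x-y)=0$, hence $B=0$ and then $A=0$.

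\emph{Step 2 (existence in $R$).} Let $h\in R$ and choose a lift $H\in S$. By Step 1, $H=A+Bx$ with $A,B$ symmetric. Let $a,b$ be their images in $R$. These are $\sigma$-invariant because the $S_2$-action on $R$ is induced from $S$ (Descent of Symmetry). So $h=a+bx$ with $a,b\in R^{S_2}$.

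This argument uses no averaging, so it does not require $\operatorname{char}\kk\neq 2$. It also shows that the image of $\kk[e_1,e_2]$ in $R$ is all of $R^{S_2}$. Indeed, if $h\in R^{S_2}$, write $h=a+bx$. Then $h=\sigma h=a+by$, so $b(x-y)=0$. Once we know $x-y\neq 0$ in $R$, this forces $b=0$ and $h=a$, which is the image of a symmetric polynomial. Hence ``symmetric polynomials'' and ``elements of $R^{S_2}$'' agree, as the statement implicitly asserts.

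\emph{Step 3 (uniqueness in $R$).} Suppose $a+bx=a'+b'x$ with $a,a',b,b'\in R^{S_2}$. Applying $\sigma$ gives $a+by=a'+b'y$. Subtracting the two equations yields $(b-b')(x-y)=0$ in $R$. Since $g$ is irreducible, $R$ is a domain, so $b=b'$ provided $x-y\neq 0$ in $R$. Then $a=a'$ follows immediately.

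The main obstacle is exactly the nondegeneracy condition $x-y\notin (g)$. Because $g$ is irreducible, this fails only when $g$ is a scalar multiple of $x-y$, that is, when the curve is the diagonal itself. In that case $\sigma$ acts trivially on $R$, so $R=R^{S_2}$ and the representation is not unique. I will therefore state this case explicitly as excluded; it is implicit in calling $R$ a genuine quadratic extension of $R^{S_2}$. Under this exclusion, Steps 1–3 give $R=R^{S_2}\oplus R^{S_2}\,x$, and $x$ satisfies the monic quadratic $T^2-e_1T+e_2$ over $R^{S_2}$.
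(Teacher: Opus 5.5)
Your proof is correct. Its existence step is the paper's argument, since the paper's whole proof is a one-line appeal to the fundamental theorem of symmetric polynomials and the relation $x^2=e_1x-e_2$. That citation buys brevity. However, it only describes $\kk[x,y]$ as a free $\kk[e_1,e_2]$-module on $\{1,x\}$, while the statement concerns the quotient $R$ and its invariant ring $R^{S_2}$.

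Your identity $b(x-y)=0$ in the domain $R$ is the genuinely new ingredient. Without any Reynolds-operator averaging, and so in every characteristic, it gives uniqueness in $R$. It also shows that $R^{S_2}$ is exactly the image of $\kk[e_1,e_2]$, so the statement's two descriptions of $a,b$ (elements of $R^{S_2}$, and symmetric polynomials) coincide.

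It also isolates a hypothesis the paper omits. If $g$ is associate to $x-y$, then $\sigma$ is the identity on $R$, and $x=0+1\cdot x=x+0\cdot x$ are two representations with coefficients in $R^{S_2}$. The paper's Definition of a symmetric coordinate ring allows this case, since it only requires $(g)$ to be $\sigma$-stable. In characteristic $2$ the case even survives the later Assumption that $g$ is symmetric, because $x+y$ is then irreducible, symmetric, and defines a non-singular curve. So your exclusion $x-y\notin(g)$ is necessary, not merely convenient.
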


\begin{proof}
This is a standard consequence of the fundamental theorem of symmetric polynomials; $x$ and $y$ are roots of the generic quadratic $T^2 - e_1 T + e_2 = 0$ over the invariant subring.
\end{proof}

\begin{rem}[Generalization to Higher Dimensions]
While restricting to one-dimensional symmetric coordinate rings allows us to utilize the rigid arithmetic of Dedekind domains (where ideal factorization strictly equates to prime powers), this geometric philosophy naturally extends to higher dimensions. In a higher dimensional symmetric quotient ring, the analogue of our geometric prime factorization would be the study of the primary decomposition of Principal Symmetric Ideals. We leave the exploration of this higher dimensional primary decomposition to future work.
\end{rem}

\subsection{Arithmetic Regularity and Factorization}
\begin{assumption}
Throughout this paper, we assume that $g \in \kk[x,y]$ is an irreducible, symmetric polynomial where $V(g)$ defines a non-singular curve.
\end{assumption}

With the algebraic basis established, we turn to the arithmetic properties of $R$. In a general polynomial ring $\kk[x_1, \dots, x_n]$, unique factorization of ideals inevitably fails. However, our geometric assumption of non-singularity ensures that $R$ exhibits the rigid arithmetic structure necessary to recover unique factorization.

\begin{lem}\label[lem]{lem:dedekind}
Let $R = \kk[x,y]/(g)$, where $g \in \kk[x,y]$ defines a non-singular affine plane curve. Then $R$ is a Dedekind domain. Consequently, every proper, non-zero ideal $I \subset R$ admits a unique factorization into prime ideals.
\end{lem}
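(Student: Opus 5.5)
The plan is to verify the three defining properties of a Dedekind domain for $R$: it is Noetherian, it has Krull dimension one, and it is integrally closed in its fraction field. Unique factorization of proper non-zero ideals into primes is then the standard structure theorem for Dedekind domains and needs no further argument.

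For the first two properties I would argue as follows.
\begin{itemize}
\item \emph{Noetherian domain.} $R$ is a quotient of the Noetherian ring $\kk[x,y]$, so it is Noetherian by the Hilbert basis theorem. Because $g$ is irreducible in the UFD $\kk[x,y]$, the ideal $(g)$ is prime, so $R$ is a domain.
\item \emph{Dimension one.} $\kk[x,y]$ is an affine domain of dimension $2$, and $(g)$ is a height-one prime by Krull's principal ideal theorem. The dimension formula for affine domains then gives $\dim R = 2 - 1 = 1$.
\end{itemize}

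The substantive step is integral closedness, and this is where non-singularity enters. A Noetherian domain is integrally closed if and only if each of its localizations at maximal ideals is. So it suffices to show that $R_{\mathfrak m}$ is a discrete valuation ring for every maximal ideal $\mathfrak m$. A Noetherian local domain of dimension one is a DVR if and only if it is regular, that is, $\dim_{\kk(\mathfrak m)} \mathfrak m/\mathfrak m^2 = 1$.

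To compute $\mathfrak m/\mathfrak m^2$, let $\mathfrak n \subset \kk[x,y]$ be the preimage of $\mathfrak m$. Then
\[
\mathfrak m/\mathfrak m^2 \cong \mathfrak n/(\mathfrak n^2 + (g)).
\]
The space $\mathfrak n/\mathfrak n^2$ has dimension $2$, since $\kk[x,y]_{\mathfrak n}$ is regular of dimension two. Passing to the quotient drops the dimension to $1$ exactly when $g \notin \mathfrak n^2$. By the Jacobian criterion, this holds exactly when $(\partial g/\partial x, \partial g/\partial y)$ does not vanish at the point corresponding to $\mathfrak n$. That non-vanishing is precisely the non-singularity hypothesis in the standing Assumption.

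I expect the main obstacle to be making the Jacobian criterion rigorous when $\kk$ is not algebraically closed or not perfect. In that setting a maximal ideal $\mathfrak n$ may correspond to a closed point with residue field a finite extension of $\kk$. Over an imperfect field, "non-singular" (vanishing of partials) and "regular" can also differ. My first approach is to interpret the Assumption as smoothness of $V(g)$, meaning the partials and $g$ generate the unit ideal. Smoothness implies regularity at every closed point, which can be checked after base change to $\overline{\kk}$ using faithful flatness. If that route is too heavy, I would fall back on assuming $\kk$ is perfect, or algebraically closed, as is implicit in the paper's geometric language. Then every closed point is handled directly by the computation above, with $\mathfrak n = (x-a, y-b)$ after a finite extension.
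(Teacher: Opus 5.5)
Your argument is correct. The paper does not prove this lemma itself. It cites Lorenzini (Ch.~VII, Cor.~2.7 for the Dedekind property; Ch.~III, Thm.~2.8 for unique factorization). What you have written is a self-contained version of the standard argument behind that citation: a Noetherian domain of dimension one by the principal ideal theorem, with normality checked locally as regularity via $\mathfrak m/\mathfrak m^2\cong\mathfrak n/(\mathfrak n^2+(g))$.

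Your version shows where each hypothesis enters. In particular, irreducibility of $g$ is essential. You correctly take it from the standing Assumption, but the lemma as stated omits it, and $g=x(x-1)$ is non-singular while $R$ is not a domain.

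The obstacle you anticipate is smaller than you fear. You only use the implication ``$\partial g/\partial x,\partial g/\partial y$ not both in $\mathfrak n$ $\Rightarrow$ $g\notin\mathfrak n^2$'', and it holds over any field. By the Leibniz rule, every $\kk$-derivation $D$ of $\kk[x,y]$ maps $\mathfrak n^2$ into $\mathfrak n$. So $g\in\mathfrak n^2$ would put $g,\partial g/\partial x,\partial g/\partial y$ all in $\mathfrak n$, contradicting $(g,\partial g/\partial x,\partial g/\partial y)=(1)$.

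This reading of non-singularity is also not an extra assumption. The paper's hypothesis, that $g$ and its partials have no common zero in $\overline{\kk}^2$, is equivalent to it by the Nullstellensatz. This is because $1\in I\,\overline{\kk}[x,y]$ forces $1\in I$, since $\overline{\kk}[x,y]$ is free over $\kk[x,y]$ on a basis containing $1$.

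What fails over imperfect fields is only the converse (regular need not imply smooth), which you never need. So neither perfectness of $\kk$ nor descent of regularity along $\kk\to\overline{\kk}$ is required.
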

\begin{proof}
This is a standard result in the arithmetic of algebraic curves, for instance see \cite[Chapter VII, Corollary 2.7, p. 229]{LOR}. The consequent unique factorization of ideals is detailed in \cite[Chapter III, Theorem 2.8, p. 91]{LOR}.
\end{proof}

By the unique factorization of ideals, any proper non zero ideal $I$ can be expressed as $I = \mathfrak{p}_1^{e_1} \cdots \mathfrak{p}_k^{e_k}$. The exponents $e_i$ in this factorization can be naturally interpreted via discrete valuations $v_{\mathfrak{p}_i}$ corresponding to the localizations $R_{\mathfrak{p}_i}$. While the fundamental theorem of ideal arithmetic guarantees that every symmetric ideal factors uniquely into primes, it does not guarantee that these ideals are principal. A central question of this paper is determining precisely when a symmetric ideal is principal an obstruction measured entirely by the Ideal Class Group.

\section{Geometric Factorization}
\label{chap: Geometric Factorization}

In this section, we present the first major result of this paper: a geometric characterization of the prime factorization of PSIs. By transitioning to the coordinate ring of a symmetric curve, we demonstrate that the algebraic problem of factoring $I_f = (f, \sigma f)R$ is strictly equivalent to the geometric problem of computing the intersection of the curve with the symmetric locus defined by $f$. 

A key advantage of our approach over ambient ring methods \cite{WAL25} is the rigid bound on minimal generators. Because our symmetric coordinate ring $R$ is a Dedekind domain, every ideal is generated by at most two elements. Thus, the symmetric ideal $I_f$ is intrinsically bounded, generated exclusively by the set $\{f, \sigma f\}$ (or solely by $f$ if the ideal is already principal).

\subsection{The Symmetric Intersection Locus}
To compute the factorization of $I_f$, we must locate the prime ideals containing it. Under the dictionary of algebraic geometry, these prime ideals correspond directly to specific points on the underlying curve.

\begin{defn}[Symmetric Intersection Locus]
Let $C(\bar{\kk})$ denote the set of points of the curve $C$ over the algebraic closure $\bar{\kk}$. The \emph{Symmetric Intersection Locus} of $f$, denoted $Z(f)$, is the set of points on the curve where both the generator and its conjugate vanish simultaneously 
\[ Z(f) = \{ P \in C(\bar{\kk}) \mid f(P) = 0 \text{ and } f(\sigma P) = 0 \}. \]
\end{defn}

\begin{lem}[Symmetry of the Locus]
The set $Z(f)$ is invariant under the action of $S_2$. In particular, if $P \in Z(f)$, then $\sigma(P) \in Z(f)$.
\end{lem}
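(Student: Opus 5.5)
The plan is to verify directly that $\sigma$ maps $Z(f)$ into itself. Two preliminary facts are needed first.

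\textbf{$\sigma$ acts on the points of the curve.} For a point $P=(a,b)\in C(\bar{\kk})$, set $\sigma P=(b,a)$. By the standing Assumption, $g$ is symmetric, so $g(b,a)=g(a,b)=0$ and $\sigma P\in C(\bar{\kk})$. (If one only assumes that the ideal $(g)$ is $S_2$-stable, then $g(y,x)=c\,g(x,y)$ for a unit $c$, and the conclusion is the same.)

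\textbf{Evaluation is compatible with $\sigma$.} For any $h\in R$ we have $(\sigma h)(P)=h(\sigma P)$, since $\sigma h(x,y)=h(y,x)$. This is well defined on $R=\kk[x,y]/(g)$ by the Descent of Symmetry proposition. We also use that $\sigma$ is an involution: $\sigma^2 P=P$ and $\sigma^2 f=f$.

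\textbf{Main step.} Let $P\in Z(f)$, so that $f(P)=0$ and $f(\sigma P)=0$. We check the two defining conditions for $\sigma P$:
\[
f(\sigma P)=0 \quad\text{and}\quad f(\sigma(\sigma P))=f(P)=0 .
\]
The first holds because it is exactly the second condition for $P$. The second holds by $\sigma^2=\mathrm{id}$ together with the first condition for $P$. Hence $\sigma P\in Z(f)$, so $\sigma(Z(f))\subseteq Z(f)$. Applying $\sigma$ once more gives the reverse inclusion, so $\sigma(Z(f))=Z(f)$.

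An equivalent way to see this: $Z(f)$ is the common zero locus of $f$ and $\sigma f$, that is, $V(I_f)$. The ideal $I_f=(f,\sigma f)R$ is $\sigma$-stable, since $\sigma$ swaps its two generators, and therefore its zero locus is $\sigma$-stable.

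The only step needing care is that $\sigma$ preserves the curve, which is exactly where the symmetry hypothesis on $g$ is used. Everything else is a formal consequence of $\sigma^2=\mathrm{id}$.
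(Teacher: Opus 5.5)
Your proof is correct and follows the paper's argument, which simply invokes the definition of $Z(f)$ together with $\sigma^2=\mathrm{id}$. Your explicit check that $\sigma$ maps $C(\bar{\kk})$ to itself, using the symmetry of $g$, fills in a detail the paper leaves implicit.
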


\begin{proof}
This follows immediately from the definition of $Z(f)$ and the fact that $\sigma$ is an involution ($\sigma^2 = \operatorname{id}$).
\end{proof}

\begin{thm}[Geometric Factorization]\label{thm:factorization}
Let $I_f \subset R$ be a Principal Symmetric Ideal. The prime factorization of $I_f$ is uniquely determined by the orbits of the intersection locus $Z(f)$. Specifically:
\[ I_f = \prod_{\mathcal{O} \subseteq Z(f)} \mathfrak{P}_{\mathcal{O}}^{v_{\mathcal{O}}} \]
where $\mathfrak{P}_{\mathcal{O}}$ is the ideal associated to the orbit $\mathcal{O}$, and the exponent $v_{\mathcal{O}}$ is the local valuation of the ideal at any point $P \in \mathcal{O}$.
\end{thm}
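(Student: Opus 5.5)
The plan is to combine \Cref{lem:dedekind} with the usual correspondence between maximal ideals and Galois orbits of points, and then use the $S_2$-equivariance of valuations.

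\textbf{Step 1: support of $I_f$.} By \Cref{lem:dedekind}, $R$ is Dedekind. Since $f \neq 0$, $I_f$ is a non-zero ideal, so it factors uniquely as $I_f = \prod \mathfrak{p}^{v_{\mathfrak{p}}(I_f)}$ over maximal ideals $\mathfrak{p}$. We may take $I_f$ proper, since otherwise $Z(f)=\emptyset$ and both sides equal $R$. Then:
\begin{itemize}
\item Maximal ideals of $R$ correspond to $\operatorname{Gal}(\bar{\kk}/\kk)$-orbits of points of $C(\bar{\kk})$, by the Nullstellensatz.
\item $I_f \subseteq \mathfrak{m}_P$ exactly when $f(P)=0$ and $(\sigma f)(P)=0$. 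Since $(\sigma f)(P) = f(\sigma P)$, this is exactly the condition $P \in Z(f)$.
\end{itemize}
So the primes appearing in the factorization are exactly those attached to points of $Z(f)$, and $v_{\mathfrak{m}_P}(I_f) = \min\{v_P(f), v_P(\sigma f)\} = \min\{v_P(f), v_{\sigma P}(f)\}$.

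\textbf{Step 2: grouping into $S_2$-orbits.} Since $\sigma$ is a ring automorphism with $\sigma(I_f) = I_f$, it permutes the maximal ideals and preserves valuations: $v_{\sigma\mathfrak{p}}(I_f) = v_{\mathfrak{p}}(\sigma^{-1} I_f) = v_{\mathfrak{p}}(I_f)$. Concretely, $\min\{v_P(f), v_{\sigma P}(f)\}$ is symmetric in $P \leftrightarrow \sigma P$. Hence the exponent is constant on each orbit $\mathcal{O}$, and we call it $v_{\mathcal{O}}$. Then define
\[
\mathfrak{P}_{\mathcal{O}} = \prod_{\mathfrak{p} \leftrightarrow \mathcal{O}} \mathfrak{p},
\]
so that $\mathfrak{P}_{\mathcal{O}} = \mathfrak{p}$ for a fixed (diagonal) point and $\mathfrak{P}_{\mathcal{O}} = \mathfrak{p}\,\sigma(\mathfrak{p})$ for a free orbit. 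Regrouping the unique factorization by orbits gives the displayed formula. Uniqueness of this expression follows from uniqueness of prime factorization, because the $\mathfrak{P}_{\mathcal{O}}$ for distinct orbits are products of disjoint sets of primes.

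\textbf{Main obstacle: rationality.} The delicate point is bookkeeping over non-closed $\kk$. A maximal ideal corresponds to a Galois orbit of $\bar{\kk}$-points, not to a single point, and the $S_2$-action and the Galois action may merge or split orbits. For example, $\sigma P$ can be Galois-conjugate to $P$, and then the "orbit" collapses to a single prime even though the two points are distinct.

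I would handle this by indexing by orbits of the combined group $S_2 \times \operatorname{Gal}(\bar{\kk}/\kk)$ acting on $Z(f)$; the two actions commute, since $\sigma$ is defined over $\kk$. Alternatively, one can simply read the statement with $\kk$ algebraically closed. In either reading, Steps 1 and 2 go through verbatim, with $\mathfrak{P}_{\mathcal{O}}$ the product of the distinct maximal ideals lying over $\mathcal{O}$.
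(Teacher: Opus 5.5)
Your proposal is correct and follows essentially the same route as the paper. You determine the support of $I_f$ via $f(P)=0=f(\sigma P)$, use $\sigma(I_f)=I_f$ with uniqueness of factorization to get $v_P(I_f)=v_{\sigma P}(I_f)$, and regroup by orbits. Your extra care about non-closed $\kk$ (the paper tacitly identifies the primes of $R$ with single points of $C(\bar{\kk})$, which is only valid when $\kk=\bar{\kk}$) and about the case $I_f=R$ refines details the paper omits; it is not a different argument.
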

\begin{proof}
    Because $R$ is a Dedekind domain by \Cref{lem:dedekind}, the non-zero proper ideal $I_f$ admits a unique factorization into prime ideals. By the point maximal ideal correspondence, every non-zero prime ideal in $R$ is of the form $\mathfrak{p}_P$ for a unique point $P \in C(\bar{\kk})$. Thus, we may express the factorization of $I_f$ as the formal product:
    $$ I_f = \prod_{P \in C} \mathfrak{p}_P^{v_P(I_f)} $$
    where $v_P(I_f)$ is the discrete valuation of $I_f$ at the localization $R_{\mathfrak{p}_P}$, and $v_P(I_f) \ge 0$ for all $P$, with only finitely many exponents strictly greater than zero.

    First, we determine the support of this ideal. A prime ideal $\mathfrak{p}_P$ appears in the factorization of $I_f$ with a strictly positive exponent if and only if $I_f \subseteq \mathfrak{p}_P$. Because $I_f$ is generated by $\{f, \sigma f\}$, this containment holds if and only if both $f \in \mathfrak{p}_P$ and $\sigma f \in \mathfrak{p}_P$. 
    
    Evaluating these elements at the point $P$, the condition $f \in \mathfrak{p}_P$ is equivalent to $f(P) = 0$. For the conjugate generator, observe that the action of $\sigma$ on the coordinate ring corresponds geometrically to the coordinate involution, meaning $(\sigma f)(P) = f(\sigma P)$. Therefore, $\sigma f \in \mathfrak{p}_P$ is equivalent to $f(\sigma P) = 0$. Both conditions are satisfied simultaneously if and only if $P \in Z(f)$. Consequently, $v_P(I_f) > 0$ strictly when $P \in Z(f)$, allowing us to restrict the product to the symmetric intersection locus:
    $$ I_f = \prod_{P \in Z(f)} \mathfrak{p}_P^{v_P(I_f)} $$

    Next, we organize this product by the orbits of the $S_2$-action. The symmetric group $S_2$ acts on the set $Z(f)$, partitioning it into disjoint orbits $\mathcal{O}$. For any point $P \in Z(f)$, the corresponding orbit is either a single point $\mathcal{O} = \{P\}$ (if $P$ lies on the diagonal $y=x$) or a pair of distinct points $\mathcal{O} = \{P, \sigma P\}$. 
    
    We claim that the valuation is constant on orbits; that is, $v_P(I_f) = v_{\sigma P}(I_f)$. To see this, apply the automorphism $\sigma$ to the ideal $I_f$. By definition, $I_f = (f, \sigma f)$, so $\sigma(I_f) = (\sigma f, \sigma^2 f) = (\sigma f, f) = I_f$. The ideal is strictly invariant under $\sigma$. Applying $\sigma$ to the prime factorization yields:
    $$ I_f = \sigma(I_f) = \sigma\left( \prod_{P \in Z(f)} \mathfrak{p}_P^{v_P(I_f)} \right) = \prod_{P \in Z(f)} \sigma(\mathfrak{p}_P)^{v_P(I_f)} = \prod_{P \in Z(f)} \mathfrak{p}_{\sigma P}^{v_P(I_f)} $$
    Because the prime factorization in a Dedekind domain is unique, the exponent of $\mathfrak{p}_{\sigma P}$ in this expanded product must equal the exponent of $\mathfrak{p}_{\sigma P}$ in the original factorization. Therefore, $v_P(I_f) = v_{\sigma P}(I_f)$. 

    Let $v_{\mathcal{O}}$ denote this common uniform valuation for all points in the orbit $\mathcal{O}$. By defining $\mathfrak{P}_{\mathcal{O}} = \prod_{P \in \mathcal{O}} \mathfrak{p}_P$, we can group the prime factors orbit by orbit:
    $$ I_f = \prod_{\mathcal{O} \subseteq Z(f)} \left( \prod_{P \in \mathcal{O}} \mathfrak{p}_P \right)^{v_{\mathcal{O}}} = \prod_{\mathcal{O} \subseteq Z(f)} \mathfrak{P}_{\mathcal{O}}^{v_{\mathcal{O}}} $$
    This concludes the proof.
\end{proof}

\subsection{Visualizing the Locus}
At first glance, \Cref{thm:factorization} may appear abstract. To clarify the mechanics of the theorem, it is instructive to adopt a strictly geometric perspective. In a Dedekind domain, factoring an ideal is geometrically synonymous with identifying points on a curve alongside their intersection multiplicities.

To visualize the ideal $I_f = (f, \sigma f)$ within $R = \kk[x,y]/(g)$, we treat the affine plane curve $C = V(g)$ as the ambient space. The generators impose two simultaneous constraints: the vanishing of $f$, defining the curve $Z_f(\kk)$, and the vanishing of its conjugate $\sigma f$, defining $Z_{\sigma f}(\kk)$. Crucially, $Z_{\sigma f}(\kk)$ is the geometric reflection of $Z_f(\kk)$ across the diagonal line $y=x$.

Consequently, a point $P$ lies in the support of the ideal $I_f$ if and only if it satisfies all three conditions simultaneously:
\[ P \in Z(f) \quad \Longleftrightarrow \quad P \in C \cap Z_f(\kk) \cap Z_{\sigma f}(\kk). \]
Since $C$ is invariant under the involution $\sigma : (x,y) \mapsto (y,x)$, and the generating set is symmetrically closed, the intersection locus $Z(f)$ is inherently $S_2$-invariant. Rather than appearing as isolated points, the prime ideals naturally organize into orbits under the $S_2$-action, appearing in symmetric "packets."

\subsection{Classification of Prime Orbits}
Having established that the prime factors are governed by geometric orbits, we can classify these factors based entirely on their geometric position relative to the axis of symmetry, $y=x$.

\begin{thm}[Classification of Prime Factors of $I_f$]
Let $R = \kk[x,y]/(g)$ be a symmetric coordinate ring, and let $I_f = (f,\sigma f) \subset R$. Then the prime ideals appearing in the factorization of $I_f$ are classified according to the geometry of the $S_2$-orbits in $Z(f)$. Exactly one of the following occurs for each orbit $\mathcal{O} \subset Z(f)$:
\begin{enumerate}
\item \emph{(Symmetric primes - diagonal case).} If $\mathcal{O} = \{P\}$ consists of a single point, then $P$ lies on the diagonal $y=x$ and is fixed by the involution $\sigma$. The corresponding prime ideal $\mathfrak{p}$ satisfies $\sigma(\mathfrak{p}) = \mathfrak{p}$.
\item \emph{(Conjugate primes - split case).} If $\mathcal{O} = \{P,\sigma P\}$ consists of two distinct points, then $P$ does not lie on the diagonal. The corresponding prime ideals occur in conjugate pairs $\mathfrak{p}$ and $\sigma(\mathfrak{p})$, and the ideal $I_f$ contains the symmetric product $\mathfrak{p}\cdot\sigma(\mathfrak{p})$.
\end{enumerate}
\end{thm}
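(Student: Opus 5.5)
The plan is to deduce the classification directly from \Cref{thm:factorization} together with the orbit structure of an involution. The action of $S_2=\{\mathrm{id},\sigma\}$ on $Z(f)$ has orbits of size dividing $2$, so every orbit $\mathcal{O}$ is either a singleton or a pair of distinct points. This gives the dichotomy ``exactly one of the following occurs''. It remains to identify the geometry and the algebraic properties of the prime ideals in each case.

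For the diagonal case, I would argue as follows. A singleton orbit $\{P\}$ means $\sigma P = P$. Writing $P=(a,b)$, this says $(b,a)=(a,b)$, so $a=b$ and $P$ lies on $y=x$. For the prime ideal, I would use the identity $\sigma(\mathfrak{p}_P)=\mathfrak{p}_{\sigma P}$, which was already used in the proof of \Cref{thm:factorization}. The justification is that $h\in\mathfrak{p}_{\sigma P}$ iff $h(\sigma P)=0$ iff $(\sigma h)(P)=0$ iff $\sigma h\in\mathfrak{p}_P$. Hence $\sigma(\mathfrak{p}_P)=\mathfrak{p}_P$.

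For the split case, I would argue in reverse. If $\mathcal{O}=\{P,\sigma P\}$ with $P\neq\sigma P$, then $P$ is not on the diagonal, by the converse of the computation above. The primes $\mathfrak{p}=\mathfrak{p}_P$ and $\sigma(\mathfrak{p})=\mathfrak{p}_{\sigma P}$ are distinct because the point–maximal ideal correspondence is injective. By \Cref{thm:factorization}, both occur in the factorization of $I_f$ with the same exponent $v_{\mathcal{O}}\ge 1$. So $\mathfrak{P}_{\mathcal{O}}=\mathfrak{p}\,\sigma(\mathfrak{p})$ divides $I_f$. In a Dedekind domain, to contain is to divide, so $I_f\subseteq\mathfrak{p}\,\sigma(\mathfrak{p})$.

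The main obstacle is the phrase ``$I_f$ contains the symmetric product $\mathfrak{p}\cdot\sigma(\mathfrak{p})$''. Literally, the containment $\mathfrak{p}\sigma(\mathfrak{p})\subseteq I_f$ fails in general, for instance when $v_{\mathcal{O}}\ge 2$ or when $Z(f)$ has other orbits. I would therefore interpret it, and prove it, as the divisibility statement above: $\mathfrak{p}\sigma(\mathfrak{p})\mid I_f$, which is equivalent to $I_f\subseteq\mathfrak{p}\sigma(\mathfrak{p})$. Equivalently, the conjugate pair enters the factorization of $I_f$ only through the $\sigma$-stable factor $\mathfrak{P}_{\mathcal{O}}^{v_{\mathcal{O}}}$.

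A secondary point to check is the identification of points with primes. Here I would work over $\bar{\kk}$, or interpret $P$ up to Galois conjugacy, as the paper implicitly does in \Cref{thm:factorization}. I would not pursue rationality issues further.
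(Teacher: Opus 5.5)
Your proposal is correct and follows the paper's proof essentially step for step: the orbit-size dichotomy, the fixed-point computation forcing $a=b$, the identification $\sigma(\mathfrak{p}_P)=\mathfrak{p}_{\sigma P}$, and equal exponents from \Cref{thm:factorization} giving $\mathfrak{p}\,\sigma(\mathfrak{p})\mid I_f$. The only difference is that you justify $\sigma(\mathfrak{p}_P)=\mathfrak{p}_{\sigma P}$ via $(\sigma h)(P)=h(\sigma P)$, whereas the paper maps explicit generators $\langle x-a,y-b\rangle\mapsto\langle x-b,y-a\rangle$; your reading of ``contains'' as $I_f\subseteq\mathfrak{p}\,\sigma(\mathfrak{p})$ is exactly what the paper's proof establishes, so your remark that the literal reverse containment fails in general correctly fixes the statement's wording and is not a gap.
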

\begin{proof}
    By \Cref{thm:factorization}, the prime factors of $I_f$ correspond to the points in the symmetric intersection locus $Z(f)$, and these factors are grouped by the orbits of the $S_2$-action. Let $\mathcal{O} \subseteq Z(f)$ be such an orbit. Since $S_2$ is a group of order $2$ acting via the involution $\sigma$, the orbit-stabilizer theorem dictates that the size of the orbit $\mathcal{O}$ must be strictly either $1$ or $2$. We proceed by analyzing these two exhaustive cases.

    \textbf{Case 1: (Symmetric primes)} Suppose $|\mathcal{O}| = 1$. Then $\mathcal{O} = \{P\}$ for a single point $P \in Z(f)$. This implies that $P$ is a fixed point of the involution, meaning $\sigma(P) = P$. Let the coordinates of $P$ in $\mathbb{A}^2_k$ be $(a,b)$. By the definition of the geometric action, $\sigma(a,b) = (b,a)$. The fixed-point condition $(a,b) = (b,a)$ forces $a = b$, demonstrating that $P$ lies exactly on the diagonal line $y = x$. Let $\mathfrak{p}$ be the maximal ideal corresponding to $P$, which is of the form $\langle x-a, y-a \rangle / (g)$. Applying the algebraic involution $\sigma$ to the generators of this ideal yields:
    $$ \sigma(\mathfrak{p}) = \sigma \left( \frac{\langle x-a, y-a \rangle}{(g)} \right) = \frac{\langle y-a, x-a \rangle}{(g)} = \mathfrak{p}. $$
    Thus, the prime ideal itself is strictly invariant under the $S_2$-action.

    \textbf{Case 2: (Conjugate primes)} Suppose $|\mathcal{O}| = 2$. Then $\mathcal{O} = \{P, \sigma P\}$ consists of two geometrically distinct points. Let $P = (a,b)$. Because $P \neq \sigma P$, it follows that $(a,b) \neq (b,a)$, meaning $a \neq b$. Therefore, neither $P$ nor $\sigma P$ lies on the diagonal. Let $\mathfrak{p}$ be the prime ideal corresponding to $P$, generated by $\langle x-a, y-b \rangle / (g)$. Applying the involution $\sigma$ to this ideal gives:
    $$ \sigma(\mathfrak{p}) = \sigma \left( \frac{\langle x-a, y-b \rangle}{(g)} \right) = \frac{\langle y-a, x-b \rangle}{(g)} = \frac{\langle x-b, y-a \rangle}{(g)}, $$
    which is precisely the prime ideal corresponding to the distinct mirror point $\sigma P$. Thus, the prime ideals corresponding to the points in this orbit form a conjugate pair, $\mathfrak{p}$ and $\sigma(\mathfrak{p})$. 
    
    By \Cref{thm:factorization}, the ideals $\mathfrak{p}$ and $\sigma(\mathfrak{p})$ both appear in the prime factorization of $I_f$ with the exact same exponent $v_{\mathcal{O}} \ge 1$. Since $P \neq \sigma P$, the corresponding maximal ideals $\mathfrak{p}$ and $\sigma(\mathfrak{p})$ are distinct and therefore comaximal. Consequently, their product divides $I_f$, which in the language of ideals means $I_f \subseteq \mathfrak{p} \cdot \sigma(\mathfrak{p})$. This establishes that $I_f$ is contained within the symmetric product of the conjugate pair.
\end{proof}

\section{Ramification and Geometric Singularity}
\label{chap: Ramification}

In \Cref{chap: Geometric Factorization}, we established the geometric factorization theorem, demonstrating that the prime factors of PSIs are in natural correspondence with the intersection points of their associated curves. In the present chapter, we refine this analysis by determining the \emph{multiplicities} with which these prime factors occur. Our goal is to characterize precisely when a symmetric ideal fails to be square-free and to relate this algebraic phenomenon to the local geometry of the curve.

Although Dedekind domains admit unique factorization of ideals into prime powers, this does not inherently guarantee that the resulting ideals are radical. The appearance of higher powers of prime ideals reflects additional geometric structure invisible at the level of set theoretic intersections, described as \emph{ramification}. We show that the exponent $e_i$ in the factorization $I = \prod_i \mathfrak{p}_i^{e_i}$ measures the order of contact of the corresponding components. 

\subsection{The Local Tangency Criterion}

\begin{thm}[Tangency Criterion]\label{thm:tangency}
Let $I_f$ be a Principal Symmetric Ideal. Let $P \in Z(f)$ be an intersection point corresponding to the prime ideal $\mathfrak{p}$. The following are equivalent:
\begin{enumerate}
    \item The ideal is ramified at $\mathfrak{p}$ (i.e., $e_{\mathfrak{p}} > 1$).
    \item The intersection of the curve $C$ and the generator locus $V(f)$ at $P$ is tangential.
\end{enumerate}
\end{thm}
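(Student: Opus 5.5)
The plan is to reduce both conditions to orders of vanishing in the discrete valuation ring $R_{\mathfrak{p}}$, using that $C$ is non-singular at $P$.

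\textbf{Step 1: the local exponent.} By \Cref{lem:dedekind} and \Cref{thm:factorization}, the exponent of $\mathfrak{p}=\mathfrak{p}_P$ in $I_f$ is $e_{\mathfrak{p}} = v_P(I_f)$. Since $I_f=(f,\sigma f)$ and $R_{\mathfrak{p}}$ is a DVR, the localized ideal $I_fR_{\mathfrak{p}}$ is generated by whichever generator has smaller valuation. Hence
\[ e_{\mathfrak{p}} = \min\{\operatorname{ord}_P(f),\ \operatorname{ord}_P(\sigma f)\} = \min\{\operatorname{ord}_P(f),\ \operatorname{ord}_{\sigma P}(f)\}. \]
The second equality holds because $\sigma$ is an automorphism carrying $\mathfrak{p}_{\sigma P}$ to $\mathfrak{p}_P$. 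Both orders are at least $1$, since $P\in Z(f)$.

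\textbf{Step 2: tangency as a valuation condition.} Because $C$ is smooth at $P$, choose a local parameter $t\in\mathfrak{p}$. Then $\operatorname{ord}_P(f)$ equals the intersection multiplicity $I_P(C,V(f))=\dim_{\kk}\mathcal{O}_{\mathbb{A}^2,P}/(g,f)$. Passing to $\bar{\kk}$ if necessary, use the standard fact that for a smooth curve $C$ this multiplicity is $1$ exactly when $V(f)$ is smooth at $P$ and transverse to $C$. This follows from the Jacobian criterion: $\operatorname{ord}_P(f)=1$ iff the image of $f$ generates $\mathfrak{p}/\mathfrak{p}^2$ iff $dg(P)$ and $df(P)$ are linearly independent. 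So condition (2) of the theorem is equivalent to $\operatorname{ord}_P(f)\ge 2$.

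\textbf{Step 3: combine.} The implication (1)$\Rightarrow$(2) is immediate from Step 1: $e_{\mathfrak{p}}\ge2$ forces $\operatorname{ord}_P(f)\ge 2$. For (2)$\Rightarrow$(1) we must show that $\operatorname{ord}_P(f)\ge2$ also forces $\operatorname{ord}_P(\sigma f)\ge 2$. In the diagonal case of the classification of prime factors, $\sigma P=P$, and $\sigma$ acts on $R_{\mathfrak{p}}$ preserving $\mathfrak{p}$ and hence $\mathfrak{p}^2$. So $f\in\mathfrak{p}^2$ implies $\sigma f\in\mathfrak{p}^2$, and $e_{\mathfrak{p}}\ge2$.

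\textbf{Main obstacle: the split case $P\neq\sigma P$.} Here $\operatorname{ord}_P(\sigma f)=\operatorname{ord}_{\sigma P}(f)$ is the order of $f$ at the mirror point, which Step 2 does not control. My first attempt would be to use the orbit constancy $v_P(I_f)=v_{\sigma P}(I_f)$ from \Cref{thm:factorization}, aiming to transfer the tangency at $P$ to the needed vanishing at $\sigma P$. However, that identity only equates the two minima, which are the same number, so it gives nothing new. I therefore expect this step to require the most care. I would verify the implication on explicit examples in the off-diagonal case, for instance on a symmetric conic with $f$ tangent to $C$ at $P$ but transverse at $\sigma P$. If the implication fails there, the theorem as worded holds exactly on the diagonal primes together with the orbits where $V(f)$ meets $C$ tangentially at both $P$ and $\sigma P$. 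The proof would then record that the reverse implication in the split case needs tangency at $\sigma P$ as well.
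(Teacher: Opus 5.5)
Your local reduction is the same as the paper's. You pass to the DVR $\mathcal{O}_{C,P}$, note that $e_{\mathfrak p}\ge 2$ iff both $f$ and $\sigma f$ lie in $\mathfrak m_P^2$, and identify tangency of $V(f)$ with $C$ at $P$ with $f\in\mathfrak m_P^2$ via the cotangent space $\mathfrak m_P/\mathfrak m_P^2$. Your Steps 1 and 2, the implication (1)$\Rightarrow$(2), and the diagonal case of (2)$\Rightarrow$(1) are correct. The step you isolate is exactly where the paper's proof breaks. Having used only $f\in\mathfrak m_P^2$, it disposes of the second generator with the sentence ``by the $S_2$-symmetry of the intersection locus, the identical local behavior governs the conjugate $\sigma f$.'' But $\sigma$-stability of $Z(f)$ only gives $\operatorname{ord}_{\sigma P}(f)\ge 1$, not $\operatorname{ord}_{\sigma P}(f)\ge 2$. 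And, as you observe, $\sigma$-stability of $I_f$ only equates the two minima. So that sentence is valid only when $\sigma P=P$. No argument can repair it, because (2)$\Rightarrow$(1) is false in the split case.

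Your proposed conic test settles this. Let $\operatorname{char}\kk\neq 2$, $g=x^2+y^2-1$ and $f=x-1+y^2$. In $R$ one has $f=x(1-x)$ and $\sigma f=y(1-y)$, and $Z(f)=\{P,\sigma P\}$ with $P=(1,0)$, $\sigma P=(0,1)$. At $P$, $y$ is a uniformizer, $x$, $1+x$ and $1-y$ are units, and $(1-x)(1+x)=y^2$. Hence $\operatorname{ord}_P(f)=2$, whereas $\operatorname{ord}_P(\sigma f)=1$. Geometrically, the parabola $x=1-y^2$ is tangent to the circle at $P$; indeed $\det J(g,f)=2y(2x-1)$ vanishes there. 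So $e_{\mathfrak p_P}=1$, and in fact $I_f=\mathfrak p_P\,\mathfrak p_{\sigma P}$ is square-free even though (2) holds at $P$.

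The correct statement is already contained in your Step 1: $e_{\mathfrak p}=\min\{\operatorname{ord}_P(f),\operatorname{ord}_{\sigma P}(f)\}$. Thus $\mathfrak p$ is ramified iff $V(f)$ meets $C$ non-transversally at both $P$ and $\sigma P$, equivalently iff both $V(f)$ and $V(\sigma f)$ are tangent to $C$ at $P$. This collapses to the stated criterion exactly on the diagonal. You should record that as the theorem rather than try to close the split case. The same example also shows that the later global test must use $\Delta(f)+\Delta(\sigma f)$ in place of $\Delta(f)$.
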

\begin{proof}
    We study the intersection locally by passing to the local ring of the curve $C$ at the point $P$, denoted $\mathcal{O}_{C,P}$. Because we assumed $C$ is a non-singular curve, $\mathcal{O}_{C,P}$ is a discrete valuation ring with a unique maximal ideal $\mathfrak{m}_P$ corresponding to $\mathfrak{p}$, and an associated discrete valuation $v_P$. By the properties of Dedekind domains, the prime ideal $\mathfrak{p}$ appears in the unique factorization of $I_f$ with exponent $e_{\mathfrak{p}} = v_P(I_f)$. Thus, the ideal is ramified at $\mathfrak{p}$ if and only if $e_{\mathfrak{p}} \ge 2$, which is algebraically equivalent to the local containment condition $I_f \mathcal{O}_{C,P} \subseteq \mathfrak{m}_P^2$. Since $I_f$ is generated by the set $\{f, \sigma f\}$, this containment is satisfied if and only if its generators individually reside in $\mathfrak{m}_P^2$, focusing on the primary generator, we require $f \in \mathfrak{m}_P^2$, which means $v_P(f) \ge 2$. We translate this local algebraic containment into geometry by examining the Zariski cotangent space $\mathfrak{m}_P/\mathfrak{m}_P^2$. A valuation of $v_P(f) = 1$ indicates that $f \in \mathfrak{m}_P \setminus \mathfrak{m}_P^2$, meaning the image of $f$ forms a basis for the one-dimensional cotangent space and acts as a local uniformizer. Geometrically, because this linear term does not vanish, the Zariski tangent spaces $T_P(C)$ and $T_P(V(f))$ intersect transversely. Conversely, the ramification condition $v_P(f) \ge 2$ forces $f \in \mathfrak{m}_P^2$, meaning the class of $f$ in the cotangent space is strictly zero. Geometrically, the vanishing of this linear component implies that the differential defining $T_P(V(f))$ restricts to zero everywhere along $T_P(C)$, forcing the linear dependence $T_P(C) \subseteq T_P(V(f))$, which is the precise definition of a tangential intersection. Therefore, the algebraic ramification condition is satisfied if and only if the geometric tangency condition holds, and by the $S_2$-symmetry of the intersection locus, the identical local behavior governs the conjugate $\sigma f$, completing the equivalence.
\end{proof}
To bridge the gap between geometric intuition and algebraic structure, recall that every point $P$ on an algebraic curve corresponds to a unique prime ideal $\mathfrak{p}_P$ in its coordinate ring. This correspondence translates an \textit{intersection} directly into \textit{ideal membership}: a function $f$ has a root at $P$ if and only if $f \in \mathfrak{p}_P$. However, simple membership fails to distinguish between a curve that cuts transversely through an axis and one that merely grazes it. To capture this nuance, we must examine the higher powers of the maximal ideal.

Consider the local behavior of $f$ near $P$. In the generic case of a \textit{transversal intersection}, the function behaves linearly, falling into $\mathfrak{p}_P$ but avoiding $\mathfrak{p}_P^2$, yielding a simple valuation of 1. In contrast, a \textit{tangential intersection} occurs when the curves share a tangent line. Here, the linear term vanishes, forcing $f$ into the square of the prime ideal, $\mathfrak{p}_P^2$. The valuation jumps to at least 2, yielding a ramified prime factor $\mathfrak{p}^2$.

\subsection{A Coordinate-Free Approach via Kähler Differentials}
Applying the local tangency criterion point by point is computationally intractable. We require a global method to detect these grazing intersections. To achieve this, we apply the machinery of Kähler differentials and exterior algebra.

Let $R = \kk[x,y]$. The geometry of the intersection between the plane curves defined by $f \in R$ and $g \in R$ is natively encoded in the linear dependence of their differentials $df$ and $dg$ within the module $\Omega_{R/\kk}$.

\begin{thm}[Global Tangency Criterion]\label{thm:jacobian}
The curves defined by $V(f)$ and $V(g)$ intersect tangentially at a point $p$ if and only if their differentials are linearly dependent in the fiber $\Omega_{R/\kk} \otimes \kk(p)$. Globally, this dependence is characterized by the vanishing of the Jacobian determinant $\det(J(g,f))$.
\end{thm}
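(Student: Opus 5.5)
The plan is to reduce tangency at a point $p \in V(f)\cap V(g)$ to a statement about the cotangent space of $\mathbb{A}^2$ at $p$, and then identify that statement with the rank of a $2\times 2$ matrix.

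First, I will use the standard presentation $\Omega_{R/\kk} = R\,dx \oplus R\,dy$, which is free of rank two since $R=\kk[x,y]$. Tensoring with the residue field $\kk(p)$ gives a two-dimensional vector space with basis $dx|_p, dy|_p$. Under the canonical isomorphism $\Omega_{R/\kk}\otimes \kk(p) \cong \mathfrak{m}_p/\mathfrak{m}_p^2$ (valid for a $\kk$-rational point, or after base change to $\bar{\kk}$), this is the Zariski cotangent space. The image of $dh$ is the linear part
\[ dh|_p = \tfrac{\partial h}{\partial x}(p)\,dx + \tfrac{\partial h}{\partial y}(p)\,dy. \]

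Second, I will interpret the tangent lines. Since $V(g)$ is non-singular, $dg|_p \neq 0$, and $T_p(V(g)) = \ker(dg|_p)$ is a line in $T_p\mathbb{A}^2$. Tangency, in the sense used in \Cref{thm:tangency}, means $T_p(V(g)) \subseteq T_p(V(f))$, i.e.\ $\ker(dg|_p)\subseteq \ker(df|_p)$. By elementary linear algebra on a two-dimensional space, with $dg|_p\neq 0$, this containment holds if and only if $df|_p$ is a scalar multiple of $dg|_p$, which is exactly linear dependence in the fiber. This covers both the case $df|_p=0$ (a singular point of $V(f)$, where the containment is trivially satisfied) and the case where the two lines coincide. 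As a consistency check with \Cref{thm:tangency}, I will note that this is equivalent to the image of $f$ lying in $\mathfrak{m}_P^2$ of $\mathcal{O}_{C,P}$. Indeed, $\mathfrak{m}_P/\mathfrak{m}_P^2$ for the curve is the quotient of $\mathfrak{m}_p/\mathfrak{m}_p^2$ by the span of $dg|_p$.

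Third, I will pass to the global determinant. The two covectors are linearly dependent if and only if the matrix of coefficients
\[ J(g,f)(p) = \begin{pmatrix} g_x(p) & g_y(p) \\ f_x(p) & f_y(p)\end{pmatrix} \]
has determinant zero. Equivalently, $dg\wedge df = \det J(g,f)\, dx\wedge dy$ vanishes in $\Lambda^2\Omega_{R/\kk}\otimes\kk(p)$. So the tangential intersection points are exactly the common zeros of $f$, $g$, and $\det J(g,f)$. This is the global characterization, and it is computable from the ideal $(f,g,\det J(g,f))$ without locating points individually.

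The main obstacle is the precise sense of "tangential" when $V(f)$ is singular at $p$ or $p$ is not $\kk$-rational. I will handle the former by adopting the inclusion definition of tangency used in \Cref{thm:tangency}. I will handle the latter by base-changing to $\bar{\kk}$, where the cotangent identification holds and the determinant condition is unaffected by field extension.
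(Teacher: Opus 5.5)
Your proposal is correct and is essentially the standard Jacobian/Kähler-differential argument that the paper does not write out but only cites as a known result. You identify $\Omega_{R/\kk}\otimes\kk(p)$ with the cotangent space, reduce $\ker(dg|_p)\subseteq\ker(df|_p)$ to proportionality of $df|_p$ and $dg|_p$ using $dg|_p\neq 0$, and read this off from $\det J(g,f)(p)=0$. Your explicit adoption of the inclusion definition of tangency (so that singular points of $V(f)$ count as tangential, as the vanishing determinant requires) and your check that the condition is equivalent to $f\in\mathfrak{m}_P^2$ in $\mathcal{O}_{C,P}$ make precise what the paper leaves to the reference, and tie the statement back to \Cref{thm:tangency}.
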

\begin{proof}
This is an immediate consequence of the standard Jacobian criterion for transversality and the properties of Kähler differentials. See \cite[Section 3.5, Theorem 3.18, p. 196]{SHA}.
\end{proof}

\begin{rem}
This establishes that the Jacobian determinant is not merely an analytic artifact, but the coordinate representation of the intrinsic area form $df \wedge dg$.
\end{rem}

\subsection{The Symmetric Discriminant}
In the language of calculus, the vanishing of the Jacobian determinant $\det(J) = 0$ corresponds to the gradient vectors being parallel. By combining this determinant with our original defining equations, we can package the entire geometric tangency condition into a single, global algebraic object.

\begin{defn}[Symmetric Discriminant Ideal]
Let $g,f \in \kk[x,y]$. The \emph{Symmetric Discriminant ideal} associated to $f$ (with respect to $g$) is
\[ \Delta(f) := (\, g,\ f,\ \det(J(g,f)) \,) \subset \kk[x,y]. \]
\end{defn}

\begin{defn}[Tangency Locus]
The \emph{tangency locus} of $g$ and $f$ is the algebraic subset $\mathcal{T}(g,f) := V\!\bigl(\Delta(f)\bigr)$. Equivalently, it consists of points on $V(g)$ at which the hypersurfaces defined by $g$ and $f$ fail to meet transversely.
\end{defn}

This construction yields a powerful computational corollary, transforming the abstract problem of detecting ramification into a concrete algebraic test over the ambient ring.

\begin{cor}
Let $f,g \in \kk[x,y]$, and let $I_f$ denote the principal symmetric ideal generated by $f$. Then $I_f$ is \emph{ramified} if and only if the Symmetric Discriminant ideal $\Delta(f)$ is not the unit ideal.
\end{cor}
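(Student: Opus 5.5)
The plan is to show both implications by passing through the geometric statements already proved, working over $\bar{\kk}$ so that points of $V(\Delta(f))$ correspond to maximal ideals. By the Nullstellensatz, $\Delta(f)$ is not the unit ideal if and only if $V(\Delta(f)) = V(g)\cap V(f)\cap V(\det J(g,f))$ is non-empty. So the corollary reduces to the following claim: $I_f$ has some prime factor $\mathfrak{p}$ with $e_{\mathfrak{p}}\ge 2$ if and only if there is a point $P$ with $g(P)=f(P)=0$ and $\det J(g,f)(P)=0$.

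For the forward direction, suppose $I_f$ is ramified at $\mathfrak{p}=\mathfrak{p}_P$. Then $P\in Z(f)$ by \Cref{thm:factorization}, so $g(P)=f(P)=0$. By \Cref{thm:tangency}, $C$ and $V(f)$ meet tangentially at $P$. By \Cref{thm:jacobian}, $df$ and $dg$ are then dependent in the fiber at $P$, so $\det J(g,f)(P)=0$. Hence $P\in V(\Delta(f))$ and $\Delta(f)\neq(1)$.

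For the converse, take $P\in V(\Delta(f))$. Since $g(P)=0$ and $C$ is non-singular, $dg(P)\neq 0$, and $\det J(g,f)(P)=0$ says $df(P)$ is proportional to $dg(P)$. The restriction of $df$ to $T_P(C)$ therefore vanishes, so $v_P(f)\ge 2$ in $\mathcal{O}_{C,P}$.

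The main obstacle is here. $\Delta(f)$ only records $f$, whereas ramification of $I_f$ at $P$ needs $v_P(I_f)=\min(v_P(f),v_P(\sigma f))\ge 2$. We must also have $P\in Z(f)$, which requires $f(\sigma P)=0$, and $\sigma f$ must vanish to order $\ge 2$ at $P$.

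I would first handle this using the symmetry of the intersection locus, following the closing line of the proof of \Cref{thm:tangency}. That theorem asserts that the local behaviour of $\sigma f$ at $P$ mirrors that of $f$ at $\sigma P$. So I would try to show that the tangency conditions can be transported by $\sigma$, and that ramification can be read off from a single generator.

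If this does not go through directly, I would interpret the corollary under the standing convention of \Cref{thm:tangency}: ramification is tested at points of $Z(f)$, and tangency is detected by the primary generator $f$. With that reading, points of $V(\Delta(f))$ lying in $Z(f)$ give ramified primes by \Cref{thm:tangency}.

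A safer fallback would be to replace $\Delta(f)$ by $\Delta(f)+\sigma\Delta(f)$, which also imposes $\sigma f=0$ and $\det J(g,\sigma f)=0$. With that ideal the converse holds cleanly: the ideal is proper exactly when there is a point $P$ where $C$ is tangent to both $V(f)$ and $V(\sigma f)$, which gives $v_P(I_f)\ge 2$.

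I expect this discrepancy between $f$ and the pair $\{f,\sigma f\}$ to be the only delicate point. All other steps are direct applications of the Nullstellensatz, \Cref{thm:tangency}, and \Cref{thm:jacobian}.
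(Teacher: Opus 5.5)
Your forward direction is correct: if $v_P(I_f)\ge 2$ then $v_P(f)\ge 2$, so $df(P)\in\bar{\kk}\,dg(P)$ and $P\in V(\Delta(f))$. For the converse you have identified the right issue, but your first two repairs cannot succeed, because the converse is false. Since $\sigma(\mathfrak p_Q)=\mathfrak p_{\sigma Q}$, one has $v_P(\sigma f)=v_{\sigma P}(f)$, and therefore $v_P(I_f)=\min\bigl(v_P(f),v_{\sigma P}(f)\bigr)$. Off the diagonal, ramification at $P$ needs $V(f)$ to be tangent to $C$ at both $P$ and $\sigma P$, and nothing transports tangency from one point to the other. In effect $\Delta(f)$ detects ramification of the principal ideal $fR$, not of $I_f$.

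Here are two explicit counterexamples, with $\operatorname{char}(\kk)\neq 2$ and $g=x^2+y^2-1$.
\begin{itemize}
\item For $f=x-1$ we get $\det J(g,f)=-2y$, so $\Delta(f)=(x-1,y)\neq(1)$. Yet $I_f=(x-1,y-1)R=R$, since $g(1,1)=1$.
\item For a proper ideal, take $f=x(x-1)$. Then $Z(f)=\{(1,0),(0,1)\}$. At $P=(1,0)$ the element $y$ is a uniformizer and $x-1=-y^2/(x+1)$, so $v_P(f)=2$ but $v_P(\sigma f)=v_P\bigl(y(y-1)\bigr)=1$. Hence $I_f=\mathfrak p_{(1,0)}\mathfrak p_{(0,1)}$ is unramified. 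Nevertheless $P\in V(\Delta(f))$, because $\det J(g,f)=-2y(2x-1)$.
\end{itemize}
The second example also rules out your ``standing convention'' reading. There $P\in Z(f)$ and $V(f)$ is tangent to $C$ at $P$, yet $e_{\mathfrak p}=1$. So the implication (2)$\Rightarrow$(1) of \Cref{thm:tangency} is itself false at off-diagonal points.

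The paper's proof makes exactly the leap you flagged. From $g(P)=f(P)=\det J(g,f)(P)=0$ it concludes ``the ideal is ramified'', without checking that $P\in Z(f)$ or how $\sigma f$ vanishes at $P$. Its proof of \Cref{thm:tangency} ends by asserting that $\sigma f$ behaves at $P$ as $f$ does, whereas it behaves as $f$ does at $\sigma P$.

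So the honest outcome of your analysis is that only ``ramified $\Rightarrow \Delta(f)\neq(1)$'' holds, and your fallback is the correct repair. For symmetric $g$ one has $\sigma(g_x)=g_y$ and $(\sigma f)_x=\sigma(f_y)$. Hence $\sigma(\det J(g,f))=-\det J(g,\sigma f)$, and
\[ \Delta(f)+\sigma\Delta(f)=\bigl(g,\ f,\ \sigma f,\ \det J(g,f),\ \det J(g,\sigma f)\bigr). \]
By non-singularity its zero locus is $\{P\in C: v_P(f)\ge 2,\ v_P(\sigma f)\ge 2\}=\{P: v_P(I_f)\ge 2\}$, and the Nullstellensatz finishes the argument. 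The original $\Delta(f)$ does suffice at diagonal points, where $v_P(\sigma f)=v_P(f)$. You should present the counterexample and the corrected criterion explicitly, rather than reinterpret the corollary until it becomes true.
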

\begin{proof}
If $1 \in \Delta(f)$, the Weak Nullstellensatz implies that the system of equations defined by its generators has no common solution anywhere in the algebraic closure $\bar{\kk}$. Consequently, there is no point on the curve where $f$ vanishes with parallel gradients, meaning no point of tangency exists; therefore, the ideal is unramified. Conversely, if $\Delta(f)$ is a proper ideal, the Weak Nullstellensatz guarantees the existence of a common root $P \in \mathbb{A}^2_{\bar{\kk}}$. Because this root must simultaneously satisfy $g(P)=0$, $f(P)=0$, and $\det(J(g,f))(P) = 0$, the intersection at $P$ is strictly tangential. This guarantees that the ideal is ramified.
\end{proof}

\section{The Class Group Obstruction}
\label{chap: Class Group}

We now shift our perspective from the local to the global, addressing the arithmetic structure of these ideals within the coordinate ring as a whole. A central difficulty in the study of principal symmetric ideals is the \textbf{instability} of their generators \cite{WAL25}. In the ambient polynomial ring, the number of elements required to generate the power $I^n$ often grows combinatorially. By restricting our attention to the symmetric coordinate ring $R$, we resolve the generator instability problem by mapping PSIs into the Ideal Class Group, $\Cl(R)$.

\subsection{Galois Action and The Norm Principle}
Let $\sigma: R \to R$ be the automorphism induced by permuting variables. For any fractional ideal $J \subset R$, its conjugate ideal is defined as $\sigma(J) = \{ \sigma(r) \mid r \in J \}$. Because $\sigma$ respects principal ideals, this action descends cleanly to the quotient group.

\begin{defn}[Galois Action on the Class Group]
Let $R$ be a symmetric coordinate ring. The action of $S_2 = \{1, \sigma\}$ induces a well-defined automorphism on $\Cl(R)$. For any ideal class $\mathcal{C} \in \Cl(R)$ represented by a fractional ideal $J$, we define $\sigma \cdot [J] = [\sigma(J)]$.
\end{defn}

\Cref{prop:quadratic rep} established that $R$ behaves as a quadratic extension over its invariant subring $R^{S_2}$. Just as multiplying by a complex conjugate eliminates the imaginary component, multiplying an ideal $J$ by its Galois conjugate $\sigma(J)$ eliminates its \textbf{asymmetry}. The resulting product is generated by symmetric elements. Because the base ring of symmetric polynomials behaves like a Principal Ideal Domain (PID) in this context, the symmetrized ideal becomes principal.

\begin{lem}[The Norm Principle]\label{lem:norm}
Let $J$ be any fractional ideal in $R$. The product ideal $J \cdot \sigma(J)$ is a principal ideal generated by a symmetric element. Consequently, they act as inverses in the Class Group:
\[ [J] \cdot [\sigma(J)] = 1. \]
\end{lem}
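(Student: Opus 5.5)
The plan is to pass to the invariant subring $A := R^{S_2}$ and use the classical relative norm for the quadratic extension $A \subset R$ from \Cref{prop:quadratic rep}. First I would record the structure of $A$. Since $R$ is integral over $A$ (every $h\in R$ satisfies $T^2 - (h+\sigma h)T + h\,\sigma h$ over $A$), $A$ has Krull dimension one. It is Noetherian because $R$ is a finitely generated $A$-module, by \Cref{prop:quadratic rep}. It is integrally closed, because it is the fixed ring of a finite group acting on the normal domain $R$ from \Cref{lem:dedekind}. Hence $A$ is Dedekind, and $\mathrm{Frac}(R)/\mathrm{Frac}(A)$ is a Galois extension of degree $2$ with group $\{1,\sigma\}$.

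Second, I would prove the ideal-theoretic norm identity $J\,\sigma(J) = N(J)R$, where $N(J)$ is the fractional $A$-ideal generated by the norms $N(a) = a\,\sigma(a)$ for $a\in J$. The ideal $J\sigma(J)$ is $\sigma$-stable, so by the argument in \Cref{thm:factorization} its valuations are constant on $\sigma$-orbits. I would therefore check the identity prime by prime. For a split orbit $\{\mathfrak p,\sigma\mathfrak p\}$ over a prime $\mathfrak q$ of $A$ one has $\mathfrak q R = \mathfrak p\,\sigma(\mathfrak p)$. For a diagonal (fixed) prime the contribution is $\mathfrak p^2$ and $\mathfrak q R$ equals either $\mathfrak p^2$ or $\mathfrak p$, according to ramification. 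Factoring $J = \prod \mathfrak p^{a_\mathfrak p}$ and comparing exponents on both sides then gives $J\sigma(J) = \mathfrak N R$ for an explicit $A$-ideal $\mathfrak N$ that is a product of primes $\mathfrak q$ of $A$. The inert case, where $\mathfrak q R$ is itself prime, also has to be treated; it cannot occur over $\bar\kk$. Because $\mathfrak N R$ is extended from $A$, it is generated by symmetric elements.

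Third, I would show $\mathfrak N$ is principal in $A$, say $\mathfrak N = (a)A$. Then $J\sigma(J) = aR$ with $a$ symmetric, and $[J][\sigma J]=1$ follows at once. This is the main obstacle, and I expect it to be the weak point of the statement as written. The paper's heuristic that the invariant ring ``behaves like a PID'' is not automatic: $A = \kk[x,y]^{S_2}/(g)\cap\ldots$ is the coordinate ring of the quotient curve $C/S_2$, an affine curve in the $(e_1,e_2)$-plane, and its class group need not vanish. What the norm construction gives unconditionally is the weaker relation $[J][\sigma J] \in \operatorname{im}(\Cl(A)\to \Cl(R))$. So I would first try to prove $\Cl(A)=0$ under the standing assumptions, for instance when the quotient curve is rational with a single place at infinity, so that $A$ is a localization of a polynomial ring. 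Failing that, I would add the hypothesis $\Cl(A)=0$ explicitly, or state the lemma in the corrected form modulo the image of $\Cl(A)$. Either way the conclusion $[J][\sigma(J)]=1$ holds exactly when $\mathfrak N$ is principal.
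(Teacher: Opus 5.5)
Your plan has the same shape as the paper's proof. You form the $R^{S_2}$-ideal generated by the norms $N(a)=a\,\sigma(a)$, $a\in J$, show that $J\sigma(J)$ is its extension to $R$, and then need that ideal to be principal in $A=R^{S_2}$. Your exponent-by-exponent proof of $J\sigma(J)=\mathfrak N R$ over split, ramified and inert primes of the Dedekind domain $A$ is sound. It is tighter than the paper's version: there, the trace identity $x\sigma(y)+y\sigma(x)=N(x+y)-N(x)-N(y)$ only puts the symmetrized element into $\mathfrak a$ and never shows $x\sigma(y)\in\mathfrak aR$. The inert case you postpone is immediate, since $N(\mathfrak p)=\mathfrak q^2$ and $\mathfrak q^2R=\mathfrak p^2=\mathfrak p\,\sigma(\mathfrak p)$.

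Your doubt about the last step is justified, and this is exactly where the paper's proof breaks. The paper substitutes $u=x+y$, $v=x-y$ and asserts that every element of $R$ is uniquely a polynomial in $u$ plus $v$ times a polynomial in $u$, so that $R^{S_2}\cong\kk[u]$. But $v^2=e_1^2-4e_2$ is itself invariant, so the coefficients live in $\kk[u,v^2]$ modulo $g$. In fact $R^{S_2}\cong\kk[e_1,e_2]/(h)$ where $g=h(e_1,e_2)$, the coordinate ring of $C/S_2$, as you say. This ring is generated by $u$ alone only when $g=0$ expresses $v^2$ polynomially in $u$, e.g.\ $g=(x-y)^2-F(x+y)$.

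In general the lemma is false, because it would force $\Cl(A)$ to be killed by $4$. For a nonzero ideal $\mathfrak b\subseteq A$, the ideal $J=\mathfrak bR$ is $\sigma$-stable, and $J\sigma(J)=\mathfrak b^2R=rR$ would give $\mathfrak b^4=N_{R/A}(\mathfrak b^2R)=N(r)A$. Now take $\kk=\mathbb C$ and $g=x^2y^2-(x+y)^3-1$, so $h=e_2^2-e_1^3-1$.
\begin{enumerate}
\item Nonsingularity: since $g_x-g_y=2xy(y-x)$, the only candidate singular points are $(0,0)$ and $(6,6)$, and neither lies on $V(g)$.
\item Irreducibility: substituting $e_2=e_1^2/4$ gives the squarefree quartic $e_1^4-16e_1^3-16$, so the discriminant $e_1^2-4e_2$ has only simple zeros on $V(h)$. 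Hence $T^2-e_1T+e_2$ stays irreducible over $\kk(V(h))$, and $R$, which is free over $A$ on $1,x$, is a domain.
\end{enumerate}
So $g$ meets the standing assumption. Yet $A$ is the coordinate ring of the elliptic curve $E\colon e_2^2=e_1^3+1$ minus its point at infinity, so $\Cl(A)\cong E(\kk)$. The ideal $\mathfrak b=(e_1,e_2-1)$ corresponds to the flex $(0,1)$. Thus $(e_2-1)A=\mathfrak b^3$, while $\mathfrak b$ is not principal because $E$ has genus $1$. Hence $[\mathfrak b^4]=[\mathfrak b]\neq0$, and $J\sigma(J)$ is not principal.

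So trying to derive $\Cl(A)=0$ from the standing assumptions cannot succeed. Your fallback is the correct statement: $J\sigma(J)=\mathfrak NR$ always, so $[J][\sigma(J)]$ lies in the image of $\Cl(A)\to\Cl(R)$. One refinement of your last sentence: principality of $\mathfrak N$ is equivalent to $J\sigma(J)$ having a symmetric generator, because $R=A\oplus Ax$ gives $\mathfrak NR\cap\operatorname{Frac}(A)=\mathfrak N$. By contrast, $[J][\sigma(J)]=1$ only needs $\mathfrak NR$ to be principal. Either way, the extra hypothesis $\Cl(R^{S_2})=0$ suffices. The paper's torsion theorem, whose proof invokes this lemma, inherits the same gap.
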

\begin{proof}
    Let $K = \operatorname{Frac}(R)$ be the field of fractions of $R$, and let $K^{S_2}$ be its invariant subfield. The automorphism $\sigma$ extends to $K$, making $K / K^{S_2}$ a Galois extension of degree 2. We define the algebraic norm $N: K \to K^{S_2}$ by $N(x) = x \sigma(x)$.

    Let $J \subset K$ be a fractional ideal of $R$. We first show that the product ideal $J \cdot \sigma(J)$ is generated entirely by elements of the invariant subring $R^{S_2}$. Define $\mathfrak{a}$ to be the $R^{S_2}$-submodule generated by the norms of elements in $J$:
    \[ \mathfrak{a} = \bigl( N(x) \mid x \in J \bigr)_{R^{S_2}}. \]
    Clearly, $\mathfrak{a} R \subseteq J \cdot \sigma(J)$. To see the reverse inclusion, let $x, y \in J$. The mixed generators of $J \cdot \sigma(J)$ are of the form $x \sigma(y)$. Observe that the trace form $\operatorname{Tr}(x \sigma(y)) = x \sigma(y) + y \sigma(x)$ can be written as:
    \[ x \sigma(y) + y \sigma(x) = N(x+y) - N(x) - N(y). \]
    Since $J$ is an ideal, $x+y \in J$, and thus this trace element lies in $\mathfrak{a}$. Because $\{1, x\}$ forms a basis for $R$ over $R^{S_2}$ (as established in \Cref{prop:quadratic rep}), these symmetric trace and norm elements generate the entire product ideal. Hence, $J \cdot \sigma(J) = \mathfrak{a} R$.

    Geometrically, the invariant subring $R^{S_2}$ corresponds to the affine coordinate ring of the quotient space $X / S_2$. We can explicitly identify this quotient by applying the coordinate transformation $u = x+y$ and $v = x-y$,  Observe that the symmetric involution $\sigma(x,y) = (y,x)$ acts on this new basis via $\sigma(u) = y+x = u$ and $\sigma(v) = y-x = -v$. This explicitly identifies $\sigma$ with the standard hyperelliptic involution $\tau(u,v) = (u,-v)$. Algebraically, any element in the coordinate ring can be uniquely expressed in the form $A(u) + B(u)v$. The invariant subring $R^{S_2}$ consists precisely of the elements fixed by $\tau$. The condition $\tau(A(u) + B(u)v) = A(u) - B(u)v$ forces $B(u) = 0$ (assuming $\operatorname{char}(\kk) \neq 2$). Therefore, the invariant subring is generated entirely by the single free variable $u$, yielding $R^{S_2} \cong \kk[u]$. Geometrically, this confirms that the invariant quotient is isomorphic to the affine line $\mathbb{A}^1$. Because $\kk[u]$ is a Principal Ideal Domain (PID), every fractional ideal in $R^{S_2}$ is principal. 
    
    Therefore, $\mathfrak{a} = (h)$ for some symmetric element $h \in K^{S_2}$. Extending this back to $R$, we obtain:
    \[ J \cdot \sigma(J) = hR, \]
    which is a principal ideal. 
    
    Passing to the Ideal Class Group $\Cl(R)$, the class of any principal ideal is the identity. Thus, $[J \cdot \sigma(J)] = [(h)] = 1$. Since the class mapping is a homomorphism, we conclude $[J] \cdot [\sigma(J)] = 1$.
\end{proof}

\subsection{Torsion and Periodicity of Powers}
We now apply the Norm Principle directly to the Principal Symmetric Ideal $I_f = (f, \sigma f)R$.

\begin{thm}[PSI Torsion]\label{thm:torsion}
Let $I_f$ be a Principal Symmetric Ideal in $R$. Then the ideal class $[I_f]$ is a $2$-torsion element in the Class Group. That is: $[I_f]^2 = 1.$
\end{thm}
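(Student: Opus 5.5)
The proof will combine the $\sigma$-invariance of $I_f$ with the Norm Principle. First I will record that $I_f$ is stable under the involution. Since $I_f = (f,\sigma f)R$ and $\sigma^2 = \operatorname{id}$, we have
\[
\sigma(I_f) = (\sigma f, \sigma^2 f)R = (\sigma f, f)R = I_f .
\]
This same computation was already used in the proof of \Cref{thm:factorization} to show that valuations are constant on $S_2$-orbits, so I will simply cite it there. Because $\sigma$ carries principal ideals to principal ideals, the Galois action on $\Cl(R)$ is well defined. Hence, in $\Cl(R)$, $\sigma\cdot[I_f] = [\sigma(I_f)] = [I_f]$: the class of a PSI is fixed by the Galois action.

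Next I will apply \Cref{lem:norm} with $J = I_f$. This is legitimate because $I_f$ is a nonzero ideal of $R$ (we have $f \neq 0$), hence a fractional ideal. The lemma gives
\[
I_f \cdot \sigma(I_f) = hR
\]
for some symmetric element $h \in K^{S_2}$, and therefore $[I_f]\cdot[\sigma(I_f)] = 1$. Substituting $\sigma(I_f) = I_f$ from the first step yields $I_f^2 = hR$, so $[I_f]^2 = 1$.

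If one wants more concreteness, the generator $h$ can be identified through the proof of the Norm Principle: the ideal $\mathfrak a$ there is generated by the norms $N(x)$ for $x \in I_f$. In particular, $\mathfrak a$ contains $N(f) = f\,\sigma f$ and the trace $f^2 + (\sigma f)^2$. I would note this, but it is not needed for the torsion statement.

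The only genuine dependency, and therefore the main obstacle, is the validity of \Cref{lem:norm} itself, specifically its identification $R^{S_2}\cong \kk[u]$ as a PID. I take that lemma as established earlier in the paper. The remaining steps are formal, so in the write-up I will also check the degenerate case in which $I_f$ is the unit ideal: there the conclusion holds trivially, since its class is already the identity.
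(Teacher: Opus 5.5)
Your argument is the paper's own proof: $\sigma(I_f)=I_f$ together with \Cref{lem:norm} applied to $J=I_f$. The formal deduction is fine. The problem is that the dependency you flagged as the main obstacle, and then accepted, is false. So your proof and the paper's share a gap, and the statement fails as stated.

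Put $u=x+y$, $v=x-y$, $w=v^2$. Symmetry of $g$ means $g=G(u,w)$, hence $R=A\oplus Av$ with $A=R^{S_2}=\kk[u,w]/(G)$. This is the coordinate ring of the quotient curve $C/\sigma$. It equals $\kk[u]$ only in special shapes such as $g=(x-y)^2-F(x+y)$. In general $\Cl(A)\neq 0$, and then nothing forces $J\sigma(J)=N(J)R$ to be principal, where $N(J)\subseteq A$ is the ideal norm.

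Here is a counterexample. Over $\kk=\mathbb{C}$ let $g=(x-y)^4-(x+y)^3-1=v^4-u^3-1$. It is symmetric, and it is irreducible and nonsingular because its projective closure $V^4=U^3Z+Z^4$ is a smooth quartic. Here $A=\mathbb{C}[u,w]/(w^2-u^3-1)$ is the coordinate ring of an affine elliptic curve $E$.

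Take $f=2y+(x-y)^3=u+v^3-v$. Then $I_f=(f+\sigma f,\,f-\sigma f)=(u,\,v^3-v)R$. This vanishes on $C$ only at $P=(\tfrac12,-\tfrac12)$ and $\sigma P$, where $\partial g/\partial v\neq 0$ makes $u$ a uniformizer. Hence $I_f=\mathfrak p_P\mathfrak p_{\sigma P}=\mathfrak qR$, where $\mathfrak q=(u,w-1)\subset A$ is the maximal ideal of the flex point $q=(0,1)\in E$.

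In $A$ we have $(w-1)(w+1)=u^3$, and $w-1$ vanishes only at $q$. So $\mathfrak q^3=(w-1)A$, and $I_f^3=((x-y)^2-1)R$ is principal. But $I_f^2$ is not principal:
\begin{itemize}
\item Suppose $\mathfrak q^2R=hR$. Then $\sigma(h)R=\mathfrak q^2R$ as well, so $h\sigma(h)R=\mathfrak q^4R$.
\item Intersecting with $A$ is valid because $R=A\oplus Av$ is free, and gives $h\sigma(h)A=\mathfrak q^4=(w-1)\mathfrak q$.
\item This forces $\mathfrak q$ to be principal, i.e.\ $q$ is linearly equivalent to the point at infinity on the genus-one projective closure of $E$. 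That is impossible.
\end{itemize}
Thus $[I_f]$ has order $3$ in $\Cl(R)$. Already $J=\mathfrak p_P$ violates \Cref{lem:norm}, since $J\sigma(J)=I_f$.

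Your two-line proof becomes correct once you add a hypothesis guaranteeing that $I_f^2=I_f\sigma(I_f)=N(I_f)R$ is principal. One such hypothesis is that $R^{S_2}$ is a PID, as for $g=(x-y)^2-F(x+y)$ with $F$ squarefree. Without some such hypothesis the statement is false.
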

\begin{proof}
By definition, the ideal $I_f$ is generated by $\{f, \sigma f\}$. Applying $\sigma$ yields $\{\sigma f, f\}$. Since the generating set is identical, the ideal itself is invariant: $\sigma(I_f) = I_f$. This implies its class is fixed under the Galois action: $[\sigma(I_f)] = [I_f]$. However, by the Norm Principle (\Cref{lem:norm}), $[\sigma(I_f)] = [I_f]^{-1}$. Equating these expressions yields $[I_f] = [I_f]^{-1}$, and multiplying both sides by $[I_f]$ gives $[I_f]^2 = 1$.
\end{proof}

\begin{cor}[Periodicity of Powers]
Let $I_f$ be a Principal Symmetric Ideal and let $n \ge 1$. If $n$ is even, then $I_f^n$ is a principal ideal. If $n$ is odd, then $[I_f^n] = [I_f]$; in particular, if $I_f$ is not principal, $I_f^n$ remains non-principal and requires exactly two generators.
\end{cor}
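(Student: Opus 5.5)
The plan is to deduce the corollary from \Cref{thm:torsion} using the fact that $I \mapsto [I]$ is a group homomorphism from the group of nonzero fractional ideals of the Dedekind domain $R$ (\Cref{lem:dedekind}) onto $\Cl(R)$. The key identity is $[I_f^n] = [I_f]^n$.

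First treat even $n = 2m$. Here $[I_f^n] = ([I_f]^2)^m = 1$ by \Cref{thm:torsion}. An ideal whose class is trivial is principal by definition of the class group, so $I_f^n = hR$ for some $h \in R$. In fact the Norm Principle shows more: since $\sigma(I_f) = I_f$, we have $I_f^2 = I_f \cdot \sigma(I_f)$, which is generated by a symmetric element by \Cref{lem:norm}. Hence $I_f^{2m} = h^m R$ with $h$ symmetric. I would record this explicitly, because it gives the generator concretely rather than only through the class computation.

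Next treat odd $n = 2m+1$. Then $[I_f^n] = [I_f]^{2m}[I_f] = [I_f]$. So $I_f^n$ is principal if and only if $I_f$ is. When $I_f$ is not principal, neither is $I_f^n$, and it needs at least two generators. For the upper bound I will use the standard fact that every ideal in a Dedekind domain is generated by two elements: for any nonzero $a \in J$, one can choose $b$ with $J = (a,b)$. This fact is the one invoked at the start of \Cref{chap: Geometric Factorization}. It gives exactly two generators. Concretely, $I_f^{2m+1} = h^m I_f = (h^m f, h^m \sigma f)$, which also exhibits explicit generators.

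The main point needing care is the claim that the minimal number of generators is exactly $2$ and not $1$. This is precisely non-principality, and it follows from the class equality $[I_f^n] = [I_f] \neq 1$. The only subtlety is to check that $I_f$ is a nonzero proper ideal, or at least nonzero, so that it defines a class in $\Cl(R)$. This holds because $f \neq 0$. If $I_f = R$, the ideal is principal and the statement is vacuous.
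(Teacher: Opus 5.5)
Your proof is correct and follows the paper's argument. You use $[I_f^n]=[I_f]^n$, then \Cref{thm:torsion} makes the even powers principal and reduces the odd powers to $[I_f]$, and the two-generator property of the Dedekind domain $R$ (\Cref{lem:dedekind}) turns non-principality into exactly two generators. Your explicit generators $I_f^{2m}=h^mR$ and $I_f^{2m+1}=(h^mf,\,h^m\sigma f)$ with $h$ symmetric, obtained from \Cref{lem:norm} via $\sigma(I_f)=I_f$, are a harmless refinement that the paper does not spell out, and they rest on the same Norm Principle that already underlies \Cref{thm:torsion}.
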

\begin{proof}
Since the mapping from the monoid of fractional ideals to the Ideal Class Group is a homomorphism, we have $[I_f^n] = [I_f]^n$ for any integer $n \ge 1$. We proceed by analyzing the parity of $n$.

\textbf{Case 1 ($n$ is even):} Let $n = 2k$ for some integer $k \ge 1$. By \Cref{thm:torsion}, $[I_f]$ is a $2$-torsion element, meaning $[I_f]^2 = 1$. Therefore, 
\[ [I_f^n] = [I_f]^{2k} = \left([I_f]^2\right)^k = 1^k = 1. \]
Because its ideal class is the identity element, the ideal $I_f^n$ is strictly principal.

\textbf{Case 2 ($n$ is odd):} Let $n = 2k+1$ for some integer $k \ge 0$. Applying the torsion result again yields:
\[ [I_f^n] = [I_f]^{2k+1} = \left([I_f]^2\right)^k \cdot [I_f] = 1^k \cdot [I_f] = [I_f]. \]
Thus, $I_f^n$ resides in the exact same ideal class as $I_f$. If $I_f$ is not principal, then $[I_f] \neq 1$, which immediately implies $[I_f^n] \neq 1$. Consequently, $I_f^n$ cannot be generated by a single element. 

Finally, because $R$ is a Dedekind domain (by \Cref{lem:dedekind}), every fractional ideal can be generated by at most two elements. Since $I_f^n$ requires strictly more than one generator, it must require exactly two.
\end{proof}

By restricting to the Dedekind domain of the curve, we tame the ambient combinatorial complexity entirely, the behavior becomes strictly periodic with period 2.

\subsection{Geometric Interpretation: The Folding Operation}
We have established the rigorous algebraic stability of these ideals via $2$-torsion (\Cref{thm:torsion}). However, what is the geometric source of this obstruction? If we imagine physically \textbf{folding} the curve $C$ along the diagonal line $y = x$, this folding action classifies all points into two distinct types:

\vspace{0.5em}
\noindent\textbf{Off-Diagonal Points (The Split Case).}
Consider a point $P$ off the diagonal. Its mirror image $\sigma P$ lies on the opposite side of the fold. In the Class Group, $[\mathfrak{p}_P]$ represents a \textbf{charge} at $P$, while $[\sigma \mathfrak{p}_P]$ represents the opposite charge. For the ideal $I_f$, these charges combine as $[\mathfrak{p}_P] + [\sigma \mathfrak{p}_P]$, which annihilate each other via the Norm Principle. Off-diagonal geometry is \textit{self correcting}.

\vspace{0.5em}
\noindent\textbf{Diagonal Points (The Ramified Case).}
Consider a point $Q$ exactly on the diagonal $y = x$. When folded, $Q$ does not move; it has no twin to cancel its charge. The diagonal is the only place where \textit{residue} can accumulate. If $f$ cuts through the diagonal an odd number of times, unpaired charges remain, acting as the fundamental obstruction to principality.

\subsection{The Symmetric Index and Parity Criterion}
The abstract algebraic question \textit{Is $I_f$ principal?} collapses into a simple geometric counting problem: \textit{Does the generator $f$ hit the diagonal an even or odd number of times?} 

\begin{defn}[Symmetric Index]
Let $C$ be a symmetric curve. Let $f \in R$ be non-zero. The \textbf{Symmetric Index} of $f$, denoted $\operatorname{Ind}_{sym}(f)$, is the element of $\Cl(R)$ defined by the sum of the classes of the intersection points strictly along the diagonal:
\[ \operatorname{Ind}_{sym}(f) := \sum_{P \in Z(f) \cap \{y=x\}} v_P(f) \cdot [P], \]
where $v_P(f)$ denotes the intersection multiplicity.
\end{defn}

\begin{defn}[Split and Diagonal Divisors]
Let $D$ be the divisor associated to the ideal $I_f$. Based on the classification of prime orbits, $D$ naturally decomposes into $D = D_{split} + D_{diag}$ where,
\begin{enumerate}

    \item $D_{split} = \sum_{\mathcal{O} = \{P, \sigma P\}} v_{\mathcal{O}}(P + \sigma P)$ is the split divisor formed by off-diagonal conjugate pairs.
    \item $D_{diag} = \sum_{P \in \{y=x\}} v_P P$ is the ramified divisor formed strictly by points on the diagonal.
\end{enumerate}
\end{defn}

\begin{thm}[The General Symmetric Index Theorem]
Let $I_f = (f, \sigma f)$ be a Principal Symmetric Ideal. The ideal $I_f$ is a principal ideal if and only if its Symmetric Index vanishes: $\operatorname{Ind}_{sym}(f) = 0 \in \Cl(R).$
\end{thm}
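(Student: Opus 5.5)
The plan is to compute the class $[I_f]$ directly from the divisor decomposition $D = D_{split} + D_{diag}$ and to show that the split part contributes nothing. By \Cref{thm:factorization}, $I_f = \prod_{\mathcal{O}} \mathfrak{P}_{\mathcal{O}}^{v_{\mathcal{O}}}$. Writing the class group additively and passing to classes, we get
\[ [I_f] = \sum_{\mathcal{O}=\{P,\sigma P\}} v_{\mathcal{O}}\bigl([\mathfrak{p}_P] + [\sigma(\mathfrak{p}_P)]\bigr) + \sum_{P \in Z(f)\cap\{y=x\}} v_P(I_f)\,[\mathfrak{p}_P]. \]
This is exactly $[D_{split}] + [D_{diag}]$. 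For each off-diagonal orbit, I will apply the Norm Principle (\Cref{lem:norm}) with $J = \mathfrak{p}_P$. The product $\mathfrak{p}_P\cdot\sigma(\mathfrak{p}_P)$ is then principal, generated by a symmetric element, so $[\mathfrak{p}_P] + [\sigma(\mathfrak{p}_P)] = 0$. Hence $[D_{split}] = 0$ and $[I_f] = [D_{diag}]$.

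The second step is to identify $[D_{diag}]$ with $\operatorname{Ind}_{sym}(f)$. The definitions differ: $D_{diag}$ uses the exponent $v_P(I_f)$, while the Symmetric Index uses $v_P(f)$. Here I use the local structure at a fixed point. Since $I_f = (f,\sigma f)$ and $R_{\mathfrak{p}_P}$ is a DVR, we have $v_P(I_f) = \min(v_P(f), v_P(\sigma f))$. Because $\sigma$ is an automorphism carrying $\mathfrak{p}_P$ to $\mathfrak{p}_{\sigma P}$, we have $v_P(\sigma f) = v_{\sigma P}(f)$. For $P$ on the diagonal, $\sigma P = P$, so $v_P(\sigma f) = v_P(f)$ and therefore $v_P(I_f) = v_P(f)$. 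Identifying $[P]$ with $[\mathfrak{p}_P]$, this gives $[D_{diag}] = \operatorname{Ind}_{sym}(f)$. Diagonal points of $C \cap V(f)$ automatically lie in $Z(f)$, so the index ranges over the same points.

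Combining the two steps gives $[I_f] = \operatorname{Ind}_{sym}(f)$ in $\Cl(R)$. Since a nonzero ideal in a Dedekind domain is principal exactly when its class is trivial, $I_f$ is principal if and only if $\operatorname{Ind}_{sym}(f) = 0$.

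I expect the main obstacle to be the bookkeeping in the second step rather than the class group argument. One has to check that the exponent of a diagonal prime in $I_f$ really equals $v_P(f)$, which is where the equality $v_P(\sigma f)=v_P(f)$ at fixed points is essential. One must also read the "intersection multiplicity" in the definition of the Symmetric Index as the valuation $v_P(f)$ on the smooth curve $C$. If the points of $Z(f)$ are not $\kk$-rational, I would replace points by closed points (maximal ideals of $R$) throughout. The argument is unchanged, since the Norm Principle and the orbit decomposition apply to maximal ideals directly.
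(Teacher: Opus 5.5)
Your argument is the paper's proof: split orbits cancel by \Cref{lem:norm}, and the diagonal part is the index. Your check that $v_P(I_f)=\min\bigl(v_P(f),v_{\sigma P}(f)\bigr)=v_P(f)$ at fixed points is correct, and it supplies the identification $[D_{diag}]=\operatorname{Ind}_{sym}(f)$, which the paper only asserts. The problem is the step you and the paper both take for granted: that $\mathfrak{p}_P\,\sigma(\mathfrak{p}_P)$ is principal. For off-diagonal $P$ with $\mathfrak{q}=\mathfrak{p}_P\cap R^{S_2}$, this product equals $\mathfrak{q}R$. So it is principal exactly when the class of $\mathfrak{q}$ dies under $\Cl(R^{S_2})\to\Cl(R)$. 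The paper's proof of \Cref{lem:norm} gets this from $R^{S_2}\cong\kk[u]$. But $R^{S_2}=\kk[e_1,e_2]/(g)$ is the coordinate ring of the quotient curve $C/S_2$, and it equals $\kk[u]$ only in special cases such as $g=(x-y)^2-h(x+y)$.

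Here is a concrete failure. Take $\kk=\mathbb{C}$ and $g=(x-y)^4-(x+y)^3-1$, which is symmetric, irreducible and smooth. Put $u=x+y$, $v=x-y$, $w=v^2$. Then $R^{S_2}=\mathbb{C}[u,w]/(w^2-u^3-1)$ is the coordinate ring of $E\setminus\{O\}$ for the elliptic curve $E\colon w^2=u^3+1$. Let $f=(u-2)+v(v^2-3)$. Using $f\pm\sigma f$, we get $I_f=(u-2,\,v(v^2-3))$. Modulo $u-2$ we have $v^4=9$, so $v$ is a unit there, and hence $I_f=(u-2,\,v^2-3)=\mathfrak{q}R$, where $\mathfrak{q}$ is the maximal ideal of $Q=(2,3)\in E$. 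Thus $Z(f)$ is a single off-diagonal pair ($v=\pm\sqrt{3}$), and $\operatorname{Ind}_{sym}(f)=0$. Now suppose $I_f=hR$. Then $h\,\sigma(h)R=\mathfrak{q}^2R$. Since $R$ is free over $R^{S_2}$, contracting gives $\mathfrak{q}^2=h\,\sigma(h)R^{S_2}$. So $2Q-2O$ would be principal on $E$, i.e.\ $Q\in E[2]$, which is impossible because $w(Q)=3\neq 0$. Hence $I_f$ is not principal, and the theorem is false as stated. Your argument becomes a complete proof once you add a hypothesis such as $R^{S_2}$ being a PID, e.g.\ the hyperelliptic form above. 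Separately, your remark about non-closed $\kk$ is too quick. An inert prime $\mathfrak{p}=\sigma(\mathfrak{p})$ lying off the diagonal is neither split nor diagonal. It is principal when $R^{S_2}$ is a PID, but it has to be handled as a third case.
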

\begin{proof}
Recall from \Cref{thm:factorization} that the divisor of the ideal decomposes into a split divisor $D_{split}$ and a ramified divisor $D_{diag}$. Because $[P] + [\sigma P] = 0$ via the Norm Principle, $[D_{split}] = 0$. The class is entirely determined by the diagonal component: $[I_f] = [D_{diag}] = \operatorname{Ind}_{sym}(f)$. Thus, $I_f$ is principal if and only if $\operatorname{Ind}_{sym}(f) = 0$.
\end{proof}

\begin{cor}[The Parity Criterion]\label{cor:parity}
Let $R$ be the coordinate ring of a hyperelliptic curve. A Principal Symmetric Ideal $I_f$ is principal if and only if the generator $f$ intersects the diagonal with even total multiplicity:
\[ I_f \text{ is principal } \text{if and only if} \sum_{P \in \{y=x\}} v_P(f) \equiv 0 \pmod 2. \]
\end{cor}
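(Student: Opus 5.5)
The plan is to reduce the Parity Criterion to the General Symmetric Index Theorem and then compute $\operatorname{Ind}_{sym}(f)$ in the hyperelliptic setting. By that theorem, $I_f$ is principal if and only if
\[ \operatorname{Ind}_{sym}(f)=\sum_{P\in Z(f)\cap\{y=x\}} v_P(f)\,[P] \]
vanishes in $\Cl(R)$. So it suffices to understand the classes $[P]=[\mathfrak{p}_P]$ of the diagonal primes. The argument should show two things: each such class is $2$-torsion, and all of them coincide with a single non-trivial class $c$. Granting both, $\operatorname{Ind}_{sym}(f)=\bigl(\sum_P v_P(f)\bigr)c$. Since $c$ has order exactly $2$, this vanishes precisely when $\sum_{P\in\{y=x\}} v_P(f)\equiv 0 \pmod 2$, which is the claimed criterion.

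\textbf{Step 1: each diagonal class is $2$-torsion.} Use the coordinates $u=x+y$, $v=x-y$ from the proof of \Cref{lem:norm}. In these coordinates $\sigma$ is the hyperelliptic involution $(u,v)\mapsto(u,-v)$, the diagonal is $\{v=0\}$, and $R^{S_2}\cong\kk[u]$. A diagonal point $P=(a,a)$ lies over $u=2a$ and is a ramification point of the double cover $C\to\mathbb{A}^1$, so it is the only point of $C$ over $u=2a$. Hence the prime $(u-2a)$ of $\kk[u]$ extends to $\mathfrak{p}_P^2$ in $R$, which gives $2[P]=0$. This is the same computation as in \Cref{lem:norm}, applied with $J=\mathfrak{p}_P=\sigma(\mathfrak{p}_P)$. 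Consequently $v_P(f)[P]$ depends only on the parity of $v_P(f)$.

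\textbf{Step 2: all diagonal classes coincide.} This is where the hyperelliptic hypothesis must do real work, and I expect it to be the main obstacle. For diagonal points $P,Q$ I would try to show that $[P]-[Q]=0$ by exhibiting a function whose divisor is $P-Q$ modulo norms. The first candidate is the ratio $v/(u-2a)$ restricted suitably, using the model $v^2=h(u)$ and the fact that the diagonal points are exactly the affine zeros of $v$, so that $\operatorname{div}(v)=\sum_{\text{diag}}P$. If the plain statement fails in general, I would show that the class $\sum_{\text{diag}}[P]$ together with the relations $2[P]=0$ forces the required uniformity. This may require pinning down precisely what ``hyperelliptic'' means here, for instance a single point at infinity or odd-degree $h$, so that $\Cl(R)$ is the Jacobian and the Weierstrass points have a controlled image. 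Finally, to see that $c\neq 0$, I would check that $\mathfrak{p}_P$ is non-principal by a degree argument: a generator would have to have a simple zero at $P$ and no other affine zeros, which the degree of $h$ should rule out.

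\textbf{Step 3: assemble.} Combining Steps 1 and 2 with the General Symmetric Index Theorem gives $[I_f]=\bigl(\sum_P v_P(f)\bigr)c$, and the parity equivalence follows at once. One further consistency check is needed. The exponents $v_P$ in $D_{diag}$ are the valuations of $I_f$, whereas the index uses $v_P(f)$. At a diagonal point $\sigma$ fixes $\mathfrak{p}_P$, so $v_P(\sigma f)=v_{\sigma P}(f)=v_P(f)$, and therefore $v_P(I_f)=\min\bigl(v_P(f),v_P(\sigma f)\bigr)=v_P(f)$. The two notions agree on the diagonal.
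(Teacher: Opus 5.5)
The gap is Step 2, and it cannot be closed. Your reduction to the General Symmetric Index Theorem, Step 1, and the check $v_P(I_f)=v_P(f)$ on the diagonal are all correct in the model $v^2=h(u)$, and this is exactly the route the paper takes. But distinct diagonal points do not have the same class.

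Work in precisely the setting you suggest: $h$ squarefree of odd degree $2m+1$. Then the curve has genus $m\ge 1$, its smooth projective model has a single rational point $\infty$ at infinity, and $\Cl(R)\cong\operatorname{Pic}^0$ of that model via $\mathfrak{p}_P\mapsto[P-\infty]$. Suppose $[\mathfrak{p}_{P_i}]=[\mathfrak{p}_{P_j}]$ for diagonal points $P_i\neq P_j$. Then some rational function has divisor $P_i-P_j+n\infty$, and comparing degrees forces $n=0$. Such a function has a single simple pole, so it is a degree-one map to $\mathbb{P}^1$, which is impossible in positive genus.

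So the classes $e_i=[\mathfrak{p}_{P_i}]$ are pairwise distinct and non-zero. Over $\bar{\kk}$ they generate the $2$-torsion subgroup, which is $(\mathbb{Z}/2)^{2m}$ and not cyclic, and the only relations among them are $2e_i=0$ and $\sum_i e_i=0$. That last relation is just $\operatorname{div}(v)$, and your candidate $v/(u-2a)$ merely restates it as $[P]=\sum_{Q\neq P}[Q]$. Your fallback of extracting uniformity from $\sum_{\mathrm{diag}}[P]$ and $2[P]=0$ therefore cannot succeed. The paper's proof rests on the same false claim: that the difference of two Weierstrass points is principal, and that one Weierstrass class generates the $2$-torsion.

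In fact the corollary as stated fails in both directions, so no proof exists. For $f=x-y$ (that is, $f=v$), the ideal $I_f=(x-y)R$ is principal. Yet $v$ is a uniformizer at each of the $2m+1$ diagonal points, so the total diagonal multiplicity is odd.

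For the other direction, let $r_1\neq r_2$ be roots of $h$, let $P_i=(r_i/2,r_i/2)$, and take $f=(x-y)+(x+y-r_1)(x+y-r_2)$. Then $I_f=(v,(u-r_1)(u-r_2))R=\mathfrak{p}_{P_1}\mathfrak{p}_{P_2}$. Its total diagonal multiplicity is $2$, but its class $e_1+e_2=e_1-e_2$ is non-zero. Concretely, this happens on the elliptic curve $E\colon (x-y)^2=(x+y)^3-(x+y)$ over $\mathbb{Q}$, where $\Cl(R)\cong E(\mathbb{Q})\cong(\mathbb{Z}/2)^2$.

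What your Steps 1 and 3 do establish is $[I_f]=\sum_{P\in S}[\mathfrak{p}_P]$, where $S$ is the set of diagonal points at which $v_P(f)$ is odd. In this model that class vanishes if and only if $S$ is empty or contains every diagonal point. Equivalently, $I_f$ is principal if and only if $v_P(f)$ has the same parity at all diagonal points, which is not a condition on the total.
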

\begin{proof}
On a hyperelliptic curve, points on the diagonal correspond to fixed points of the hyperelliptic involution (Weierstrass points). The difference of any two Weierstrass points is the divisor of a rational function, meaning $[P] = [Q]$ in $\Cl(R)$. Let $\mathcal{W}$ be the class of a single Weierstrass point. The Symmetric Index becomes $\left( \sum v_P(f) \right) \cdot \mathcal{W}$. Since $\mathcal{W}$ generates the $2$-torsion subgroup, the term vanishes if and only if the coefficient is even.
\end{proof}

\section*{Acknowledgments}
The author wishes to express gratitude to Prof. Alexandra Seceleanu for reviewing this draft and providing valuable feedback.


\end{document}